\newcommand{\rar}{\rightarrow}
\newcommand{\h}{{\mathcal{H}}}
\newcommand{\s}{{\sigma}}
\newcommand{\x}{{\bf x}}
\def\f{{\bf f}}
\def\S{{\bf S}}
\newcommand{\z}{{\bf z}}
\newcommand{\T}{{\bf T}}
\def\v{{\bf v}}
\newcommand*{\horzbar}{\rule[.5ex]{2.5ex}{0.5pt}}
\newcommand*{\vertbar}{\rule[-1ex]{0.5pt}{2.5ex}}
\DeclareMathOperator{\spn}{span}
\newtheorem{remark}{Remark}
\newtheorem{lemma}{Lemma}
\newtheorem{proposition}{Proposition}
\newtheorem{corollary}{Corollary}
\title{\LARGE \bf
	Ergodic theory, Dynamic Mode Decomposition and Computation of Spectral Properties of the Koopman operator}
\author{Hassan Arbabi$^{1}$ and Igor Mezi\'c$^{2}$
	\thanks{$^{1}$Hassan Arbabi is with the department of Mechanical Engineering,
		University of California, Santa Barbara, CA, 93106, USA  {\tt\small harbabi@engr.ucsb.edu}}%
	\thanks{$^{2}$Igor Mezi\'c is with the faculty of Mechanical Engineering and Mathematics,  University of California, Santa Barbara, CA, 93106, USA  {\tt\small mezic@engr.ucsb.edu}}%
}
\begin{document}

\maketitle

\begin{abstract}
We establish the convergence of a class of numerical algorithms, known as Dynamic Mode Decomposition (DMD), for computation of the eigenvalues and eigenfunctions of the infinite-dimensional Koopman operator.  The algorithms act on data coming from observables on a state space, arranged in Hankel-type matrices. The proofs utilize the assumption that  the underlying dynamical system  is ergodic. This includes the classical measure-preserving systems, as well as systems whose attractors support a physical measure.
Our approach relies on the observation that vector projections in DMD can be used to approximate the function projections by the virtue of Birkhoff's ergodic theorem. Using this fact, we show that applying DMD to Hankel data matrices in the limit of infinite-time observations yields the true Koopman eigenfunctions and eigenvalues. We also show that the Singular Value Decomposition, which is the central part of most DMD algorithms,  converges to the Proper Orthogonal Decomposition of observables. We use this result to obtain a representation of the dynamics of systems with continuous spectrum based on the lifting of the coordinates to the space of observables. The numerical application of these methods is demonstrated using well-known dynamical systems and examples from computational fluid dynamics.
\end{abstract}

 \textbf{Keywords:}
 Koopman operator, Ergodic theory, Dynamic Mode Decomposition (DMD), Hankel matrix, Singular Value Decomposition (SVD), Proper Orthogonal Decomposition (POD)
\\

\textbf{Mathematics Subject Classification:}
  37M10, 37A30, 65P99, 37N10

\section{Introduction}

The Koopman operator theory is an alternative formulation of dynamical systems theory which provides a versatile framework for data-driven study of high-dimensional nonlinear systems. The theory is originated in 1930's through the work of Bernard Koopman and John Von Neumann \cite{Koopman:1931,KoopmanandvonNeumann:1932}.  In particular, Koopman realized that the evolution of observables on the state space of a Hamiltonian system  can be described via a linear transformation, which was later named the Koopman operator.
Years later, the work in \cite{mezic2004comparison} and \cite{Mezic:2005} revived the interest in this formalism by proving the Koopman spectral decomposition, and introducing the idea of Koopman modes. 
This theoretical progress was later complemented by data-driven algorithms for approximation of the Koopman operator spectrum and modes, which has led to a new pathway for data-driven study of high-dimensional systems
 (see e.g. \cite{rowley2009,Schmid:2010,susuki2011nonlinear,GeorgescuandMezic:2015,brunton2016extracting}).
We introduce some basics of the Koopman operator theory for continuous- and discrete-time dynamical systems in this section. The reader is referred to \cite{budisic2012applied} for a more detailed review of theory and application.

Consider a continuous-time dynamical system given by
\begin{equation}
\dot \x=\mathbf{F}(\x),
\label{DSGen}
\end{equation}
on the state space $M$, where $\x$ is a coordinate vector of the state, and $\mathbf{F}$ is a nonlinear vector-valued smooth function, of the same dimension as its argument $\x$. Let $\S^t(\x_0)$ denote the position at time $t$ of trajectory of   \cref{DSGen} that starts at $t=0$ at the state $\x_0$. We  call $\S^t(\x_0)$ the flow generated by \cref{DSGen}.

Denote by $\f$ an arbitrary, vector-valued function from $M$ to $\mathbb{C}^k$. We call $\f$ an \emph{observable} of the system in \cref{DSGen}. The value of $\f$ that is observed on a trajectory starting from $\x_0$ at time $0$ changes with time according to the flow, i.e.,
\begin{equation}
\f(t,\x_0)=\f(\S^t(\x_0)).
\end{equation}
The space of all observables such as $\f$ is a linear vector space, and we can define a family of linear operators $U^t$ with $t\in [0,\infty)$, acting on this vector space, by
\begin{equation}
U^t\f(\x_0)=\f(\S^t(\x_0)).
\label{Koopdef}
\end{equation}
Thus, for a fixed $t$, $U^t$ maps the vector-valued observable $\f(\x_0)$ to $\f(t,\x_0)$. We will call the family of operators $U^t$, indexed by time $t$, the Koopman operator of the continuous-time system (\ref{DSGen}).
In operator theory, such operators defined for general dynamical systems,  are often called composition operators\index{Composition operators}, since $U^t$ acts on observables by composing them with the flow $\S^t$ \cite{sm:1993}.

In discrete-time the definition is even simpler, if
\begin{equation}
\z'=\T(\z),
\label{discDSGen}
\end{equation}
is a discrete-time dynamical system with $\z\in M$ and $\T:M\rar M$, then the associated Koopman operator $U$ is defined by
\begin{align}
U\f(\z)=\f\circ \T(\z). \label{eq:KoopmanDef}
\end{align}
The operator $U$ is linear, i.e.,
\begin{equation}
U(c_1\f_1(\z)+c_2\f_2(\z))=c_1\f_1(\T(\z))+c_2\f_2(\T(\z))= c_1U\f_1(\z)+c_2U\f_2(\z).
\end{equation}
A similar calculation shows the linearity of the Koopman operator for continuous-time systems as well.
We call $\phi:M\rightarrow \mathbb{C}$ an eigenfunction of the Koopman operator $U$, associated with eigenvalue $\lambda\in\mathbb{C}$, when
\begin{align*}
U\phi = \lambda \phi.
\end{align*}
For the continuous-time system, the definition is slightly different:
\begin{align*}
U^t\phi = e^{\lambda t} \phi.
\end{align*}
The eigenfunctions and eigenvalues of the Koopman operator encode lots of information about the underlying dynamical system.
For example, the level sets of certain eigenfunctions determine the invariant manifolds \cite{Mezic:2015}, and the global stability of equilibria can be characterized by considering the eigenvalues and eigenfunctions of the Koopman operator \cite{mauroy2014global}. Another outcome of the this theory, which is specially useful for high-dimensional systems, is the Koopman mode decomposition (KMD). If the Koopman spectrum consists of only eigenvalues (i.e. discrete spectrum), the evolution of observables can be expanded in terms of the Koopman eigenfunctions, denoted by $\phi_j,~j=0,1,\ldots$, and Koopman eigenvalues $\lambda_j$. Consider again the observable $\f:M\rightarrow \mathbb{C}^k$. The evolution of $\f$ under the discrete system in \cref{discDSGen} is given by
\begin{align}
U^n \f(\z_0) := \f\circ \T^n(\z_0) = \sum_{j=1}^{\infty} \mathbf{v}_j \phi_j(\z_0) \lambda_j^n \label{eq:KMD}.
\end{align}
In the above decomposition, $\mathbf{v}_j\in \mathbb{C}^k$ is the Koopman mode associated with the pair $(\lambda_j,\phi_j)$ and  is given by the projection of the observable $\f$ onto the eigenfunction $\phi_j$. These modes correspond to components of the physical field characterized by exponential growth and/or oscillation in time and play an important role in the analysis of large systems (see the references in the first paragraph).

In the recent years, a variety of methods have been developed for computation of the Koopman spectral properties (eigenvalues, eigenfunctions and modes) from data sets that describe the evolution of observables such as $\f$. A large fraction of these methods belong to the class of algorithms known as Dynamic Mode Decomposition, or DMD in short (see e.g. \cite{rowley2009,Schmid:2010,tu2014dynamic,williams2015data}).
In this paper, we prove that the eigenvalues and eigenfunctions obtained by a class of DMD algorithms converge to the  eigenvalues and eigenfunctions of the Koopman operator for ergodic systems. Such proofs - that finite-dimensional approximations of spectra converge to  spectra of infinite-dimensional linear operators - are still rare and mostly done for self-adjoint operators \cite{hansen:2010} (Koopman operator is typically not self-adjoint).
Our approach here provides a new - ergodic theory inspired - proof, the strategy of which could be used in other contexts of non-self adjoint operators.

Our methodology for computation of the Koopman spectrum is to apply DMD to Hankel matrix of data.
The Hankel matrix is created by the delay-embedding of time series measurements on the observables.
Delay-embedding is an established method for geometric reconstruction of attractors for nonlinear systems based on measurements of generic observables \cite{Takens:1981,sauer1991embedology}. By combining delay-embedding with DMD, we are able to extract analytic information about the state space (such as the frequency of motion along the quasi-periodic attractor or the structure of isochrons) which cannot be computed from geometric reconstruction.
On the other hand, Hankel matrices are extensively used in the context of linear system identification (e.g. \cite{juang1985eigensystem,van2012subspace}).
The relationship between KMD and linear system identification methods was first pointed out in \cite{tu2014dynamic}. It was shown that applying DMD to the Hankel data matrix recovers the same linear system, up to a similarity transformation, as the one obtained by Eigensystem Realization Algorithm (ERA)\cite{juang1985eigensystem}. In a more recent study, Brunton and co-workers \cite{brunton2016chaos} proposed a new framework for Koopman analysis using the Hankel-matrix representation of data. Using this framework, they were able to extract a linear system with intermittent forcing that could be used for local predictions of chaotic systems (also see \cite{brunton2016koopman} and \cite{brunton2016extracting}).
Our work strengthens the above results by providing a rigorous connection between linear analysis of delay-embedded data and identification of nonlinear systems using the Koopman operator theory.
We establish the convergence of our algorithm - called \textit{Hankel-DMD} -  based on the observation that for ergodic systems the projections of data vectors converge to function projections in the space of observables. This observation has already been utilized in a different approach for computation of the Koopman spectral properties \cite{giannakis2015data}. We also note that the Hankel-DMD algorithm is closely related to the Prony approximation of KMD \cite{susuki2015prony}, and it can interpreted as a variation of Extended Dynamic Mode Decomposition \cite{williams2015data} on ergodic trajectories (also see \cite{klus2015numerical}).

The outline of this paper is as follows:
In  \cref{sec:IntroDMD}, we describe the three earliest variants of DMD, namely, the companion-matrix DMD, SVD-enhanced DMD and Exact DMD.
In  \cref{sec:ErgodicSystems}, we review some elementary ergodic theory, the Hankel representation of data and prove the convergence of the companion-matrix Hankel-DMD method.
In  \cref{sec:VanderPol}, we extend the application of this method to observations on trajectories that converge to an ergodic attractor.
In  \cref{sec:SVD-POD}, we point out a new connection between the Singular Value Decomposition (SVD) on data matrices and Proper Orthogonal Decomposition (POD) on the ensemble of observables on ergodic dynamical systems. By using this interpretation of SVD, we are able to show the convergence of the Exact DMD for ergodic systems in \cref{sec:ExactDMD}. These results enable us to extract the Koopman spectral properties from measurements on multiple observables. We recapitualte the Hankel-DMD algorithm and present some numerical examples in \cref{sec:applications}.
 We summarize our results in \cref{sec:conclusion}.

\section{Review of Dynamic Mode Decomposition (DMD)}
\label{sec:IntroDMD}
DMD was originally introduced as a data analysis technique for complex fluid flows by P.J. Schmid \cite{schmid2008,Schmid:2010}. The primary goal of this algorithm was to extract the spatial flow structures that evolve linearly with time, i.e., the structures that grow or decay exponentially - possibly with complex exponents. The connection between this numerical algorithm and the linear expansion of observables in KMD was first noted in \cite{rowley2009}, where a variant of this algorithm was used to compute the Koopman modes of a jet in cross flow.
The initial success of this algorithm in the context of fluid mechanics motivated an ongoing line of research on data-driven analysis of high-dimensional and complex systems using the Koopman operator theory, and consequently, a large number of DMD-type algorithms have been proposed in the recent years for computation of the Koopman spectral properties \cite{brunton2013compressive,tu2014dynamic,williams2015data,proctor2016dynamic,kutz2016multiresolution}.

The three variants of DMD that we consider in this work are  the \textit{companion-matrix DMD}  \cite{rowley2009}, the \textit{SVD-enhanced DMD} \cite{Schmid:2010}, and the \textit{Exact DMD} \cite{tu2014dynamic}.
The SVD-enhanced and Exact variants of DMD are more suitable for numerical implementation, while the companion-matrix method enables a  more straightforward proof for the first result of this paper.
In the following, we first describe the mathematical settings for application of DMD, and then describe the above three algorithms and the connections between them. Let
\begin{align}
\mathbf{f}:=\left[
\begin{matrix}
f_1 \\ f_2 \\ \vdots \\f_n
\end{matrix}
\right]:M\rar \mathbb{R}^n
\end{align}
be a vector-valued observable defined on the dynamical system in \cref{discDSGen}, and let
\begin{align}
D:=\left[
\begin{matrix}
f_1(z_0) & f_1\circ T(z_0) & \ldots & f_1\circ T^{m}(z_0)  \\
f_2(z_0) & f_2\circ T(z_0) & \ldots & f_2\circ T^{m}(z_0)  \\
\vdots & \vdots & \ddots & \vdots  \\
f_n(z_0) & f_n\circ T(z_0) & \ldots & f_n\circ T^{m}(z_0)
\end{matrix}
\right] \label{eq:DataMatrix}
\end{align}
be the matrix of measurements recorded on $\f$ along a trajectory starting at the initial condition $z_0\in M$. Each column of $D$ is called a \emph{data snapshot} since it contains the measurements on the system at a single time instant.
Assuming only discrete eigenvalues for the Koopman operator, we can rewrite the Koopman mode expansion in \cref{eq:KMD} for each snapshot in the form of
  \begin{align}
    D_i := \left[
    \begin{matrix}
  f_1\circ T^{i}(z_0)  \\
  f_2\circ T^{i}(z_0)  \\
  \vdots  \\
  f_n\circ T^{i}(z_0)
    \end{matrix}
    \right] =  \sum_{j=1}^\infty \lambda_j^i {\mathbf{v}}_j
    \label{eq:KMD2}
  \end{align}
  by absorbing the scalar values of $\phi_j(z_0)$ into the mode $\v_j$.
 In numerical approximation of the Koopman modes, however, we often assume this expansion is finite dimensional and use
   \begin{align}
    \mathbf{f}^i = \sum_{j=1}^n \tilde\lambda_j^i \tilde{\mathbf{v}}_j \label{eq:KMD3}
  \end{align}
 where $\tilde{\mathbf{v}}_j$ and $\tilde\lambda_j$ are approximations to the Koopman modes and eigenvalues in \cref{eq:KMD2}. This expansion resembles the spectral expansion for a linear operator acting on $\mathbb{R}^n$. This operator, which maps each column of $D$ to the next is called the \emph{DMD operator}.
 The general strategy of DMD algorithms is to construct the DMD operator, in the form of a matrix, and extract the dynamic modes and eigenvalues from the spectrum of that matrix.
 In the companion-matrix algorithm (\cref{alg:CDMD}), as the name suggests, the DMD operator is realized in the form of a companion matrix:
\begin{algorithm}[H]
\caption{Companion-matrix DMD}
\label{alg:CDMD}
\begin{algorithmic}[1]
\STATEx{Consider the data matrix $D$ defined in \cref{eq:DataMatrix}.}
\STATE{Define $X=[D_0~D_1~\ldots~D_{m-1}]$}
\STATE{Form the companion matrix
 	 \begin{align}
 	 \tilde C=\left(
 	 \begin{matrix}
 	 0 &  0  & \ldots & 0 &\tilde c_{0}\\
 	 1 &  0 & \ldots & 0& \tilde c_{1}\\
 	 0 &  1 & \ldots & 0& c_{2}\\
 	 \vdots & \vdots & \ddots &  \vdots & \vdots\\
 	 0  &   0       &\ldots & 1 & c_{m-1}
 	 \end{matrix}\right),
 	 \label{eq:compDMD}
 	 \end{align}
with
	\begin{align*}
 	 \left(
 	 \begin{matrix}
 	 c_{0},~
 	 c_{1},
 	 c_{2},~
 	 \ldots ,~
 	 c_{m-2}
 	 \end{matrix}\right)^T = X^\dagger D_{m}.
	\end{align*}
	The $X^\dagger$ denotes the Moore-Penrose pseudo-inverse of $X$ \cite{trefethen1997numerical}.}
\STATE{Let $(\lambda_j,w_j),~j=1,2,\ldots,m$ be the eigenvalue-eigenvector pairs for $\tilde C$. Then $\lambda_j$'s are the dynamic eigenvalues. Dynamic modes $\tilde v_j$ are given by
	\begin{align}
	\tilde v_j=Xw_j,\quad j=1,2,\ldots,m.
	\end{align} }
\end{algorithmic}
\end{algorithm}

In the above algorithm, the companion matrix $\tilde C$ is the realization of the DMD operator in the basis which consists of the columns in $X$. The pseudo-inverse in step 2 is used to project the last snapshot of $D$ onto this basis. In case that $D_m$ lies in the range of $X$, we have
\begin{align}
r:= D_m - X(X^\dagger D_m)=0
\end{align}
which means that the companion matrix $\tilde C$ exactly maps each column of $D$ to the next. If columns of $X$ are linearly dependent, however, the above  projection is not unique, and the problem of determining the DMD operator is generally over-constrained. Furthermore, when $X$ is ill-conditioned, the projection in step 2 becomes numerically unstable.

The SVD-enhanced DMD algorithm (\cref{alg:SVDDMD}), offers a more robust algorithm for computation of dynamic modes and eigenvalues:
\begin{algorithm}[H]
\caption{SVD-enhanced DMD}
\label{alg:SVDDMD}
\begin{algorithmic}[1]
\STATEx{Consider the data matrix $D$ defined in \cref{eq:DataMatrix}.}
\STATE{Define $X=[D_0~D_1~\ldots~D_{m-1}]$ and $Y=[D_1~D_2~\ldots~D_{m}]$}
\STATE{Compute the SVD of $X$:
 		\begin{align*}
 		X=WS\tilde V^*.
 		\end{align*}
 		\item Form the matrix
 		\begin{align*}
 		\hat A=W^*Y\tilde V S^{-1}.
 		\end{align*}}
\STATE{Let $(\lambda_j,w_j),~j=1,2,\ldots,m$ be the eigenvalue-eigenvector pairs for $\hat A$. Then $\lambda_j$'s are the dynamic eigenvalues. Dynamic modes $\tilde v_j$ are given by
 		\begin{align*}
 			\tilde v_j=Ww_j,\quad j=1,2,\ldots,m.
 		\end{align*} }
\end{algorithmic}
\end{algorithm}

In this method, the left singular vectors $W$ are used as the basis to compute a realization of the DMD operator, which is $\hat A$. In fact, column vectors in $W$ form an orthogonal basis which enhances the numerical stability of the projection process (the term $W^*Y$ in step 3).
When $X$ is full rank and $\lambda_j$'s are distinct, the dynamic modes and eigenvalues computed by this algorithm is the same as the companion-matrix algorithm \cite{chen2012variants}.

The Exact DMD algorithm (\cref{alg:EDMD}) generalizes the SVD-enhanced algorithm to the case where the sampling of the data might be non-sequential. For example, consider the data matrices
\begin{align}
X=\left[
\begin{matrix}
f_{1}(z_0) & f_{1}(z_1) & \ldots & f_{1}(z_m) \\
f_{2}(z_0) & f_{2}(z_1) & \ldots & f_{2}(z_m)  \\
\vdots & \vdots & \ddots & \vdots  \\
f_{n}(z_0) & f_{n}(z_1) & \ldots & f_{n}(z_m)
\end{matrix}
\right] \label{eq:XDataMatrix}
\end{align}
and
\begin{align}
Y=\left[
\begin{matrix}
f_{1}\circ T(z_0) & f_{1}\circ T(z_1) & \ldots & f_{1}\circ T(z_m) \\
f_{2}\circ T(z_0) & f_{2}\circ T(z_1) & \ldots & f_{2}\circ T(z_m)  \\
\vdots & \vdots & \ddots & \vdots  \\
f_{n}\circ T(z_0) & f_{n}\circ T(z_1) & \ldots & f_{n}\circ T(z_m)
\end{matrix},
\right] \label{eq:YDataMatrix}
\end{align}
where $\{z_0,z_1,\ldots,z_m\}$ denotes a set of arbitrary states of the dynamical system in \cref{discDSGen}.
The Exact DMD algorithm computes the operator that maps each column of $X$  to the corresponding column in $Y$.

\begin{algorithm}
\caption{Exact DMD}
\label{alg:EDMD}
\begin{algorithmic}[1]
\STATEx{Consider the data matrices $X$ and $Y$ defined in \cref{eq:XDataMatrix} and \cref{eq:YDataMatrix}.}
\STATE{Define $X=[D_0~D_1~\ldots~D_{m-1}]$ and $Y=[D_1~D_2~\ldots~D_{m}]$}
\STATE{Compute the SVD of $X$:
 		\begin{align*}
 		X=WS\tilde V^*.
 		\end{align*}}
\STATE{ Form the matrix
 		\begin{align*}
 		\hat A=W^*Y\tilde V S^{-1}.
 		\end{align*}}
\STATE{Let $(\lambda_j,w_j),~j=1,2,\ldots,m$ be the eigenvalue-eigenvector pairs for $\hat A$. Then $\lambda_j$'s are the dynamic eigenvalues. }
\STATE{	The exact dynamic modes $\tilde v_j$ are given by
 		\begin{align*}
 		\tilde v_j=\frac{1}{\lambda_j} Y\tilde V S^{-1} w_j,\quad j=1,2,\ldots,m.
 		\end{align*}}
\STATE{ The projected dynamic modes $\chi_j$ are given by
 		 \begin{align*}
 		  		\chi_j=W w_j,\quad j=1,2,\ldots,m.
 		 \end{align*}}
\end{algorithmic}
\end{algorithm}

The finite-dimensional operator that maps the columns of $X$ to $Y$ is known as the Exact DMD operator, with the explicit realization,
\begin{align}
\tilde A=YX^\dagger.
\end{align}
The matrix $\tilde A$ is not actually formed in \cref{alg:EDMD}, however, the dynamic eigenvalues and exact dynamic modes form the eigen-decomposition of $\tilde A$. We note that the projected dynamic modes and exact dynamic modes coincide if the column space of $Y$ lie in the range of $X$. Moreover, applying Exact DMD to $X$ and $Y$ matrices defined in algorithm \ref{alg:SVDDMD} yields the same eigenvalues and modes as SVD-enhanced DMD.

In the following, we will show how DMD operators converge to a finite-dimensional representation of the Koopman operator for ergodic systems. The critical observation that enables us  to do so, is the fact that vector projections in the DMD algorithm can be used to approximate the projections in the function space of observables.

\section{Ergodic theory and Hankel-matrix representation of data}
\label{sec:ErgodicSystems}
In this section, we recall the elementary ergodic theory and give a new interpretation of Hankel-matrix representation of data in the context of the Koopman operator theory. The main result of this section is \cref{prop:Hankel-DMD} which asserts the convergence of companion-matrix Hankel-DMD for computation of Koopman spectrum.
Despite the intuitive proof of its convergence, this method is not well-suited for numerical practice, and a more suitable alternative for numerical computation will be presented in \cref{sec:ExactDMD,sec:applications}. In \cref{sec:VanderPol}, we present analogous results for the basin of attraction of ergodic attractors.

Consider the dynamics on a compact invariant set $A$, possibly the attractor of a dissipative dynamical system, given by the measure-preserving map $T:A\rightarrow A$. Let $\mu$ be the preserved measure with $\mu(A)=1$, and assume that for every invariant set $B\subset A$, $\mu(B)=0$ or $\mu(A-B)=0$, i.e., the map $T$ is ergodic on $A$.
A few examples of ergodic sets in dynamical systems are limit cycles, tori with uniform flow and chaotic sets like Lorenz attractor.
We define the Hilbert space $\h$ to be the set of all observables on $A$ which are square-integrable with respect to the measure $\mu$, i.e.,
\begin{equation}
  \h:=\{f:A\rar \mathbb{R}~s.t.~\int_A |f|^2 d\mu<\infty\}. \label{eq:HilbertDef}
\end{equation}
 The Birkhoff's ergodic theorem \cite{Petersen:1983} asserts the existence of infinite-time average of such observables and relates it to the spatial average over the set $A$. More precisely, if $f\in\h$, then
\begin{align}
\lim_{N \rightarrow \infty}\frac{1}{N}\sum_{k=0}^{N-1} f\circ T^k(z)=\int_A f d\mu,~~\text{for almost every } z\in A. \label{eq:ergodicavg}
\end{align}
An important consequence of this theorem is that the inner products of observables in $\h$ can be approximated using the time series of observations. To see this, denote by $\tilde f_m(z_0)$ and $\tilde g_m(z_0)$ the vector of $m$ sequential observations made on observables $f,g\in\h$ along a trajectory starting at $z_0$,
\begin{subequations}
\begin{align}
\tilde f_m (z_0)= [f(z_0),~f\circ T(z_0),~\ldots,~f\circ T^{m-1}(z_0)] \label{eq:observationVector}\\
\tilde g_m (z_0)= [g(z_0),~g\circ T(z_0),~\ldots,~g\circ T^{m-1}(z_0)]
\end{align}
\end{subequations}
Then for almost every $z_0 \in A$,
\begin{align}
\lim_{m\rightarrow\infty} \frac{1}{m} <\tilde f_m (z_0),\tilde g_m(z_0)> = \lim_{m \rightarrow \infty}\frac{1}{m}\sum_{k=0}^{m-1} (f g^*)\circ T^k(z_0)
= \int_A fg^* d\mu
&=<f,g>_\h    \label{eq:innerproduct}
\end{align}
where we have used $<.,.>$ for vector inner product and $<.,.>_\h$ for the inner product of functions in $\h$. The key observation in this work is that using the data vectors such as $\tilde f_m(z_0)$ we can approximate the projection of observables onto each other according to \cref{eq:innerproduct}.

Now consider the longer sequence of observations
\begin{align}
\nonumber  \tilde{f}_{m+n}=[f(z_0),~f\circ T(z_0),~\ldots,~f\circ T^{m-1}(z_0),~\ldots,f\circ T^{m+n-1}(z_0)]
\end{align}
which could be rearranged into a Hankel matrix by delay-embedding of dimension $m$,
\begin{align}
\tilde H &= \left(
\begin{matrix}
f(z_0) &  f\circ T(z_0)  & \ldots & f\circ T^n(z_0)\\
f\circ T(z_0) &  f\circ T^2(z_0) & \ldots & f\circ T^{n+1}(z_0)\\
\vdots & \vdots & \ddots &   \vdots\\
f\circ T^{m-1}(z_0)  &   f\circ T^{m}(z_0)       &\ldots & f\circ T^{m+n-1}(z_0) \\
\end{matrix}\right)
\label{eq:Hankel}
\end{align}
Given the definition of the Koopman operator in \cref{eq:KoopmanDef}, we also observe that $j-$th column of this matrix is the sampling of the observable $U^{j-1}f$ along the same trajectory, and we can rewrite it in a more compact form,
\begin{align*}
\tilde H = \left(\tilde{f}_m,~U\tilde{f}_m,\ldots,~U^{n}\tilde{f}_m \right).
\end{align*}
This matrix can be viewed as a sampling of the Krylov sequence of observable $f$, defined as 
\begin{align*}
 \mathcal{F}_n:=[f,~Uf,~\ldots,~U^nf].
\end{align*}

The basic idea of the Hankel-DMD method is to extract the Koopman spectra from this sequence, which is analogous to the idea of Krylov subspace methods for computing the eigenvalues of large matrices \cite{saad2011numerical}.
A simplifying assumption that we utilize in most of this paper is that there exists a finite-dimensional subspace of $\h$ which is invariant under the action of the Koopman operator and contains our observable of interest $f$. In general, the existence of finite-dimensional Koopman-invariant subspaces is equivalent to the existence of the eigenvalues (i.e. discrete spectrum) for Koopman operator. To be more precise, if such invariant subspace exists, then the Koopman operator restricted to this subspace can be realized in the form of a finite-dimensional matrix and therefore it must have at least one (complex) eigenvalue. Conversely, if the Koopman operator has eigenvalues, the span of a finite number of associated eigenfunctions forms an invariant subspace. However, it is not guaranteed that the arbitrary observables such as $f$ are contained within such subspaces.

Let $k$ be the dimension of the minimal Koopman-invariant subspace, denoted by $\mathcal{K}$, which contains $f$. Then the first $k$ iterates of $f$ under the action of the Koopman operator span $\mathcal{K}$, i.e.,
\begin{equation}
  \mathcal{K}=\spn(\mathcal{F}_n), \text{ for every}~n\geq k-1
\end{equation}

This condition follows from the fact that the Koopman eigenvalues are simple for ergodic systems \cite{halmos1956lectures}, and as a result, $f$ is cyclic in $\mathcal{K}$ \cite{maccluer2008elementary}.
The following proposition shows that the eigenvalues and eigenfunctions obtained by applying DMD to $\tilde H$ converge to true eigenfunctions and eigenvalues of the Koopman operator.
Our proof strategy is to show that companion matrix formed in \cref{alg:CDMD} approximates the $k-$by$-k$ matrix which represents the Koopman operator restricted to $\mathcal{K}$.

\begin{proposition}[\textbf{Convergence of the companion-matrix Hankel-DMD algorithm}] \label{prop:Hankel-DMD} Let the dynamical system in \cref{discDSGen} be ergodic, and $\mathcal{F}_n=[f,~Uf,\ldots,U^nf]$ span a $k$-dimensional subspace of $\h$ (with $k<n$) which is invariant under the action of the Koopman operator.
	 Consider the dynamic eigenvalues and dynamic modes obtained by applying the companion-matrix DMD   (\cref{alg:CDMD}) to the first $k+1$ columns of the Hankel matrix $\tilde H_{m\times n}$ defined in \cref{eq:Hankel}.

	Then, for almost every $z_0$, as $m\rightarrow \infty$:
\newline (a) The dynamic eigenvalues converge to the Koopman eigenvalues associated with the $k$-dimensional subspace.
\newline (b) The dynamic modes converge to the sampling of associated Koopman eigenfunctions on the trajectory starting at $z_0$.
\end{proposition}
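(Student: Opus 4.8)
The plan is to show that, in the infinite-time limit, the companion matrix $\tilde C$ produced by \cref{alg:CDMD} converges to the $k \times k$ matrix representing the restriction of the Koopman operator $U$ to the invariant subspace $\mathcal{K}$, expressed in the basis $\{f, Uf, \dots, U^{k-1}f\}$. Once this is established, the convergence of eigenvalues in part (a) follows from continuity of the spectrum of a matrix under perturbations of its entries (the companion matrix has simple eigenvalues in the limit, since Koopman eigenvalues are simple for ergodic systems, so no Jordan-block pathologies arise), and part (b) follows by tracking what the eigenvector recipe $\tilde v_j = X w_j$ converges to.

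First I would set up the finite-dimensional picture. Since $\mathcal{K} = \spn(\mathcal{F}_{k-1})$ is $U$-invariant and $f$ is cyclic in $\mathcal{K}$, the vector $U^k f$ lies in $\mathcal{K}$, so there are unique scalars $c_0, \dots, c_{k-1}$ with
\begin{align*}
U^k f = c_0 f + c_1 Uf + \dots + c_{k-1} U^{k-1} f.
\end{align*}
The $k\times k$ companion matrix $C$ built from these $c_i$ is exactly the matrix of $U|_{\mathcal{K}}$ in the basis $\{f, \dots, U^{k-1}f\}$, and its eigenvalues are the $k$ Koopman eigenvalues associated with $\mathcal{K}$; if $C w = \lambda w$ then $\sum_j w_j U^{j-1} f$ is the corresponding Koopman eigenfunction (this is the standard Krylov/Prony argument, and I would cite the cyclicity fact already invoked in the text). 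Next, I would look at the algorithm's output: on the first $k+1$ columns of $\tilde H$, we have $X = [\tilde f_m, U\tilde f_m, \dots, U^{k-1}\tilde f_m]$ (the delay-embedded data vectors, each of length $m$) and the last snapshot is $U^k \tilde f_m$. Step 2 of \cref{alg:CDMD} solves the least-squares problem $X \tilde c \approx U^k \tilde f_m$, i.e. $\tilde c = X^\dagger U^k \tilde f_m = (X^* X)^{-1} X^* U^k \tilde f_m$ once $m$ is large enough that $X$ has full column rank $k$.

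The heart of the argument is then the limiting behavior of the normal equations. By \cref{eq:innerproduct}, $\tfrac{1}{m}(X^* X)_{ij} = \tfrac1m \langle U^{i-1}\tilde f_m, U^{j-1}\tilde f_m\rangle \to \langle U^{i-1}f, U^{j-1}f\rangle_{\h}$ and $\tfrac1m (X^* U^k\tilde f_m)_i \to \langle U^{i-1}f, U^k f\rangle_{\h}$ for almost every $z_0$, as $m\to\infty$. So the normalizing factors $1/m$ cancel and $\tilde c = (X^*X)^{-1}(X^* U^k\tilde f_m) \to G^{-1} b$, where $G$ is the Gram matrix of $\{f,\dots,U^{k-1}f\}$ in $\h$ and $b_i = \langle U^{i-1}f, U^k f\rangle_{\h}$; but $G^{-1}b$ is precisely the coordinate vector of the orthogonal projection of $U^k f$ onto $\mathcal{K}$, which equals $U^k f$ itself since $U^k f\in\mathcal{K}$ — hence $\tilde c \to (c_0,\dots,c_{k-1})$ and $\tilde C \to C$ entrywise. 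For part (b): if $(\lambda_j, w_j^{(m)})$ is an eigenpair of $\tilde C$, then $w_j^{(m)}\to w_j$ (an eigenvector of $C$), and the dynamic mode is $\tilde v_j = X w_j^{(m)} = \sum_i (w_j^{(m)})_i\, U^{i-1}\tilde f_m$, which is exactly the length-$m$ sampling along the $z_0$-trajectory of the function $\sum_i (w_j)_i U^{i-1} f = \phi_j$, up to the vanishing error from $w_j^{(m)} - w_j$; this is the claimed convergence of dynamic modes to the sampled Koopman eigenfunction (note the mode here is a growing vector, so "convergence" is understood componentwise / in the sense that the $\ell$-th entry stabilizes to $\phi_j(T^{\ell-1}z_0)$ as $m$ grows past $\ell$).

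The main obstacle I anticipate is handling the invertibility and conditioning of $X^* X$ rigorously in the limit: one must argue that for almost every $z_0$, the data vectors $\tilde f_m, \dots, U^{k-1}\tilde f_m$ are linearly independent for all sufficiently large $m$ (so that $X^\dagger$ is the honest left inverse and the companion construction is well-defined), and this should follow from the fact that their rescaled Gram matrix converges to the genuine Gram matrix $G$ of $k$ linearly independent functions in $\h$, which is positive definite — hence $\tfrac1m X^*X \succ 0$ eventually. A secondary point requiring care is that \cref{eq:ergodicavg} and \cref{eq:innerproduct} hold only for $\mu$-almost every initial condition, and we need all finitely many inner products ($k^2 + k$ of them) to converge simultaneously; since a finite intersection of full-measure sets is full measure, this is harmless but should be stated. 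Finally, I would remark that the convergence of eigenvectors uses simplicity of the Koopman eigenvalues (true for ergodic systems), which rules out the discontinuous dependence of eigenvectors that can occur at defective matrices.
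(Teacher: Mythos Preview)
Your proposal is correct and follows essentially the same approach as the paper: both arguments reduce the problem to the entrywise convergence $\tilde G \to G$ and $\tfrac{1}{m}X^* U^k\tilde f_m \to b$ via the ergodic theorem, deduce $\tilde C \to C$, and then push eigenvalues and eigenvectors through. The paper's treatment of part (b) is organized slightly differently --- it defines candidate functions $\tilde\phi_i = (f,\ldots,U^{k-1}f)\tilde v_i$ in $\h$ and uses an adjoint basis to show weak (hence strong, in finite dimensions) convergence $\tilde\phi_i \to \phi_i$, then observes the dynamic mode is the trajectory sampling of $\tilde\phi_i$ --- but this is equivalent in content to your direct argument, and your explicit remarks on the full-rank condition for $X$, the simplicity of Koopman eigenvalues ensuring continuous eigenvector dependence, and the finite intersection of full-measure sets are useful clarifications that the paper leaves implicit.
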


\begin{proof}
Consider the first $k$ elements of $\mathcal{F}_n$,
	\begin{align}
	\left( f,~Uf,~\ldots,U^{k-1}f \right) \label{eq:HilbertBasis}
	\end{align}
which are linearly independent. These observables provide a basis for $\mathcal{K}$, and the restriction of Koopman operator to $\mathcal{K}$ can be (exactly) realized as the companion matrix
	$$C=\left(
	\begin{matrix}
	0 &  0  & \ldots & 0 &c_{0}\\
	1 &  0 & \ldots & 0& c_{1}\\
	0 &  1 & \ldots & 0& c_{2}\\
	\vdots & \vdots & \ddots &  \vdots & \vdots\\
	0  &   0       &\ldots & 1 & c_{k-1} \\
	\end{matrix}\right)
	$$
where the last column is the coordinate vector of the function $U^kf$ in the basis, and it is given by
	\begin{align}
	\begin{bmatrix}
	c_0\\c_1\\\vdots\\c_{k-1}
	\end{bmatrix}
	= G^{-1}   \begin{bmatrix}
	<f,U^kf>_\h\\<Uf,U^kf>_\h\\\vdots\\<U^{k-1}f,U^kf>_\h
	\end{bmatrix} \label{eq:projectHilbert}
	\end{align}
Here, $G$ is the Gramian matrix of the basis given by $$G_{ij}=<U^{i-1}f,U^{j-1}f>_\h.$$
Now consider the numerical companion-matrix DMD algorithm and let $X$ be the matrix that contains the first $k$ columns of $\tilde H$.
When applied to the first $k+1$ columns of $\tilde H$, the algorithm seeks the eigenvalues of the companion matrix $\tilde C$, whose last column is given by
	\begin{align}
	\begin{bmatrix}
	\tilde c_0\\\tilde c_1\\\vdots\\\tilde c_{k-1}
	\end{bmatrix}
	= X^\dagger U^k\tilde f_m
	=\tilde{G}^{-1}
	\begin{bmatrix}
	\frac{1}{m}<\tilde f_m,U^k\tilde f_m>\\ \frac{1}{m}<U\tilde f_m,U^k \tilde f_m>\\\vdots\\ \frac{1}{m} <U^{k-1}\tilde f_m,U^k \tilde f_m> \label{eq:projectDMD}
	\end{bmatrix}
	\end{align}
In the second equality, we have used the following relationship for the Moore-Penrose pseudo-inverse of a full-rank data matrix $X$,
\begin{align*}
    X^\dagger = (X^*X)^{-1}X^* = (\frac{1}{m} X^*X)^{-1}(\frac{1}{m}X^*):= \tilde G^{-1}(\frac{1}{m}X^*).
\end{align*}
and defined the numerical Gramian matrix by $$\tilde G_{ij} = \frac{1}{m}<U^{i-1}\tilde f_m,U^{j-1}\tilde f_m>.$$

The averaged inner products in the rightmost vector of \cref{eq:projectDMD} converge to the vector of Hilbert-space inner products in \cref{eq:projectHilbert}, due to \cref{eq:innerproduct}.
The same argument suggests element-wise convergence of the numerical Gramian matrix to the $G$ in \cref{eq:projectHilbert}, i.e.,
	\begin{align*}
	\lim_{m\rightarrow \infty } \tilde G_{ij} = G_{ij}.
	\end{align*}
Furthermore
	\begin{align*}
	\lim_{m\rightarrow \infty } \tilde G ^{-1} =\left( \lim_{m\rightarrow \infty } \tilde G\right) ^{-1}=G^{-1}
	\end{align*}
We have interchanged the limit and inverting operations in the above since $G$ is invertible (because the basis is linearly independent). Thus the DMD operator $\tilde C$ converges to the Koopman operator realization $C$.
The eigenvalues of matrix are depend continuously on its entries which guarantees the convergence of the eigenvalues of $\tilde C$ to the eigenvalues of $C$ as well. This proves the statement in (a).
Now let $v_k$ be the set of normalized eigenvectors of $C$, that is,
	\begin{align*}
	Cv_j=\lambda_j v_j,~ \|v_j\|=1,~j=1,\ldots,k.
	\end{align*}
	These eigenvectors give the coordinates of Koopman eigenfunctions in the basis of \cref{eq:HilbertBasis}.
	Namely, $\phi_j,~j=1,\ldots,k$ defined by
	\begin{align}
	\phi_i = \left( f,~Uf,~\ldots,U^{k-1}f \right)  v_i,\quad i=1,\ldots,k.
	\end{align}
are a set of Koopman eigenfunctions in the invariant subspace.
Given the convergence of $\tilde C$ to $C$ and convergence of their eigenvalues, the normalized eigenvectors of $\tilde C$, denoted by $\tilde v_j,~j=0,1,\ldots,k$ also converge to $v_j$'s. We define the set of candidate functions by
	\begin{align}
	\tilde \phi_i = \left( f,~Uf,~\ldots,U^{k-1}f \right) \tilde v_i,\quad i=1,\ldots,k.
	\end{align}
and show that they converge to $\phi_i$ as $m\rightarrow\infty$.  Consider an adjoint basis of \cref{eq:HilbertBasis} denoted by $\{g_j\},~j=0,1,..,k-1$ defined such that
	$<g_i,U^jf>_\h=\delta_{ij}$ with $\delta$ being the Kronecker delta.
We have
	\begin{align*}
	\lim_{m\rightarrow \infty} <\tilde \phi_i,g_j>_\h = \lim_{m\rightarrow \infty} \tilde v_{ij} = v_{ij}= <\phi_i,g_j>_\h
	\end{align*}
where $v_{ij}$ is the $i-$th entry of $v_j$. The above statement shows the \emph{weak} convergence of the $\tilde \phi_j$ to $\phi_j$ for $j=1,\ldots,k$. However, both set of functions belong to the same \emph{finite}-dimensional subspace and therefore weak convergence is strong convergence. The $j$-th dynamic mode given by
\begin{align}
w_i=\left( \tilde f_m,~ U\tilde f_m,~\ldots, U^{k-1} \tilde f_m\right)\tilde v_i
\end{align}
is the sampling of $\tilde\phi_j$ along the trajectory, and convergence of $\tilde \phi_j$ means that $w_j$ converges to the value of Koopman eigenfunction $\phi_j$ on the trajectory starting at $z_0$.  The proposition is valid for almost every initial condition for which the ergodic average in \cref{eq:ergodicavg} exists.
\end{proof}

\begin{remark}\label{remark:ergodicsampling}
	In the above results, the data vectors can be replaced with any sampling vector of the observables that satisfy the convergence of inner products as in \cref{eq:innerproduct}. For example, instead of using $\tilde f$ as defined in \cref{eq:observationVector}, we can use the sampling vectors of the form
	\begin{align*}
	\hat f_m (z_0)= [f(z_0),~f\circ T^l(z_0),~f\circ T^{2l}(z_0),~\ldots,~f\circ T^{(m-1)l}(z_0)]
	\end{align*}
	where $l$ is a positive finite integer.
\end{remark}

\subsection{Extension of Hankel-DMD to the basin of attraction} \label{sec:VanderPol}
Consider the ergodic set $A$ to be an attractor of the dynamical system \cref{discDSGen} with a basin of attraction $\mathcal{B}$. { The existence of ergodic average in \cref{eq:ergodicavg} can be extended to trajectories starting in $\mathcal{B}$ by assuming that the invariant measure on $A$ is a physical measure \cite{eckmann1985ergodic,Young:2002}.
To formalize this notion, let $\nu$ denote the standard Lebesgue measure on $\mathcal{B}$. We assume that there is a subset $B\subset \mathcal{B}$ such that $\nu(\mathcal{B}-B)=0$ and for every initial condition in $B$ the ergodic averages of continuous functions exist. That is, if $f:\mathcal{B}\rar \mathbb{R}$ is continuous, then
\begin{align}
\lim_{N \rightarrow \infty}\frac{1}{N}\sum_{k=0}^N f\circ T^k(z)=\int_A f d\mu,~~\text{for $\nu-$almost every } z\in \mathcal{B}. \label{eq:ergodicavg2}
\end{align}
Roughly speaking, this assumption implies that the invariant measure $\mu$ rules the asymptotics of almost every trajectory in $\mathcal{B}$, and therefore it is relevant for physical observations and experiments.
Using \cref{eq:ergodicavg2}, we can extend \cref{prop:Hankel-DMD} to the trajectories starting almost everywhere in $\mathcal{B}$.
The only extra requirement is that  \emph{the observable must be continuous in the basin of attraction}.

\begin{proposition}[\textbf{Convergence of Hankel-DMD inside the basin of attraction}] \label{prop:Hankel-DMD-basin}
Let $A$ be the ergodic attractor of the dynamical system \cref{discDSGen} with the basin of attraction $\mathcal{B}$ which supports a physical measure.
Assume $f:\mathcal{B}\rar \mathbb{R}$ is a continuous function with $f|_A$ belonging to a $k$-dimensional Koopman-invariant subspace of $\h$. Let $\tilde H$ be the Hankel matrix \cref{eq:Hankel} of observations on $f$ along the trajectory starting at $z_0\in\mathcal{B}$ with $n>k$.
Consider the dynamic modes and eigenvalues obtained by applying the companion-matrix DMD (\cref{alg:CDMD}) to the first $k+1$ columns of $\tilde H$.

Then, for $\nu-$almost every $z_0$, as $m\rar\infty$:
\newline (a) The dynamic eigenvalues converge to the Koopman eigenvalues.
\newline (b) The dynamic modes converge to the value of associated eigenfunctions $\phi_j$ along the trajectory starting at $z_0$.
\end{proposition}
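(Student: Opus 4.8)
The plan is to reduce \cref{prop:Hankel-DMD-basin} to \cref{prop:Hankel-DMD} by isolating the single place in the latter's proof where ergodicity on $A$ (via Birkhoff's theorem, \cref{eq:ergodicavg}) was actually used: the convergence of the normalized vector inner products $\tfrac1m\langle U^i\tilde f_m(z_0),U^j\tilde f_m(z_0)\rangle$ to the Hilbert-space inner products $\langle U^i f, U^j f\rangle_\h$. Everything downstream of that convergence — the companion-matrix construction, $\tilde G\to G$ and hence $\tilde G^{-1}\to G^{-1}$, continuity of eigenvalues in the matrix entries, and the passage from weak to strong convergence of eigenvectors inside the fixed finite-dimensional subspace $\mathcal{K}$ — is purely algebraic and never refers to whether the initial condition lies on $A$ or merely in $\mathcal{B}$. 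The true objects $C$, $G$, $\phi_i$, $\lambda_i$ also remain exactly the ones from \cref{prop:Hankel-DMD}, since they are defined entirely inside $\h=L^2(A,\mu)$ using $f|_A$.

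First I would note that $\mathcal{B}$ is forward-invariant under $T$ and $T|_\mathcal{B}$ is continuous, so that for every $i\ge 0$ the iterate $f\circ T^i$ is a well-defined continuous function on $\mathcal{B}$; consequently, for each of the finitely many pairs $0\le i,j\le k$, the product $h_{ij}:=(f\circ T^i)(f\circ T^j)$ is continuous on $\mathcal{B}$ (the observable being real-valued, conjugation plays no role). Applying the physical-measure hypothesis \cref{eq:ergodicavg2} to each $h_{ij}$ produces a set $B_{ij}\subset\mathcal{B}$ with $\nu(\mathcal{B}-B_{ij})=0$ on which
\[ \tfrac1m\langle U^i\tilde f_m(z_0),U^j\tilde f_m(z_0)\rangle=\tfrac1m\sum_{\ell=0}^{m-1}h_{ij}\circ T^\ell(z_0)\;\longrightarrow\;\int_A h_{ij}\,d\mu=\langle U^i f,U^j f\rangle_\h, \]
the last equality using invariance of $A$ so that $(f\circ T^i)|_A=U^i(f|_A)$ in $\h$. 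Intersecting the finitely many $B_{ij}$ gives a single set $B'\subset\mathcal{B}$ with $\nu(\mathcal{B}-B')=0$ on which all required averages converge simultaneously.

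With this established, the remainder is a verbatim repetition of the proof of \cref{prop:Hankel-DMD} with $B'$ in place of the full-measure subset of $A$: the numerical Gramian $\tilde G$ converges entrywise to the invertible Gramian $G$ of the basis $(f,Uf,\dots,U^{k-1}f)$ of $\mathcal{K}$, hence $\tilde G^{-1}\to G^{-1}$ and the last column of the DMD companion matrix $\tilde C$ converges to that of the true realization $C$; continuity of the spectrum in the matrix entries yields convergence of the dynamic eigenvalues to the Koopman eigenvalues attached to $\mathcal{K}$ (simple, because $T$ is ergodic on $A$), convergence of the normalized eigenvectors $\tilde v_i\to v_i$, and then — via the adjoint basis and the coincidence of weak and strong convergence in the fixed finite-dimensional $\mathcal{K}$ — convergence of the candidate eigenfunctions $\tilde\phi_i$ to $\phi_i$. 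Since the $i$-th dynamic mode is exactly the sampling $(\tilde f_m,U\tilde f_m,\dots,U^{k-1}\tilde f_m)\,\tilde v_i$ of $\tilde\phi_i$ along the trajectory from $z_0$, it converges to the sampling of $\phi_i$, which gives (a) and (b) for $\nu$-almost every $z_0\in\mathcal{B}$.

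The one genuinely new point, and the step to watch, is that the ergodic average now has to be fed \emph{continuous} integrands: this is exactly why the hypothesis demands $f$ (and therefore each $f\circ T^i$) continuous on the whole basin rather than merely measurable on $A$, and why the conclusion holds for Lebesgue-a.e. $z_0$ and not $\mu$-a.e. The remaining items — forward-invariance of $\mathcal{B}$, finiteness of the set of inner products that enter, and the fact that the limiting objects are unchanged because they live in $\h=L^2(A,\mu)$ — are routine bookkeeping rather than real obstacles.
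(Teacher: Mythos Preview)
Your reduction of part (a) to \cref{prop:Hankel-DMD} via the physical-measure hypothesis \cref{eq:ergodicavg2} is exactly what the paper does: it simply says ``the proof of (a) is similar to \cref{prop:Hankel-DMD} and follows from the extension of ergodic averages to the basin of attraction.'' Your explicit bookkeeping (continuity of the products $h_{ij}$, intersecting finitely many full-measure sets) spells out what the paper leaves implicit.

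For part (b), however, the paper does \emph{not} repeat the adjoint-basis/weak-to-strong argument from \cref{prop:Hankel-DMD}. Instead it verifies the eigenfunction property directly along the trajectory, via the short companion-matrix computation
\[
\lim_{m\to\infty} U w_j \;=\; \lim_{m\to\infty}[\tilde f_m,\dots,U^{k-1}\tilde f_m]\,\tilde C\,\tilde v_j \;=\; \lim_{m\to\infty}\tilde\lambda_j w_j \;=\; \lambda_j w_j,
\]
and concludes that $w_j$ is (asymptotically) the sampling of the eigenfunction with eigenvalue $\lambda_j$. There is a reason for this detour. Your verbatim repetition yields $\tilde\phi_i\to\phi_i$ in $\h=L^2(A,\mu)$; but the trajectory from $z_0\in\mathcal{B}$ is not on $A$, so convergence in $L^2(A,\mu)$ says nothing about the sampled values $(\tilde\phi_i(T^\ell z_0))_\ell$ along that trajectory. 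Your final sentence ``it converges to the sampling of $\phi_i$'' therefore does not follow from the preceding $\h$-convergence. The gap is easily patched --- once you have $\tilde v_i\to v_i$ in $\mathbb{R}^k$, the continuous extensions $\tilde\phi_i=(f,Uf,\dots,U^{k-1}f)\tilde v_i$ converge pointwise on all of $\mathcal{B}$ to the continuous extension $\phi_i=(f,Uf,\dots,U^{k-1}f)v_i$, and one should then remark separately why this extension behaves as an eigenfunction along the trajectory --- but the paper's argument handles both issues in one stroke by working at the level of the sampled vectors themselves rather than passing through $\h$.
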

}
\begin{proof}
The proof of (a) is similar to \cref{prop:Hankel-DMD} and follows from the extension of ergodic averages to the basin of attraction by \cref{eq:ergodicavg2}.
To show that the dynamic mode $w_j$ converges to Koopman eigenfunctions along the trajectory, we need to consider the evolution of $w_j$ under the action of the Koopman operator:
\begin{align*}
  \lim_{m\rar \infty} U w_j &= \lim_{m\rar \infty} U [\tilde f_m, U\tilde f_m,\ldots, U^{k-1}\tilde f_m]\tilde v_j \\
                            &= \lim_{m\rar \infty}  [U\tilde f_m, U^2\tilde f_m,\ldots, U^{k}\tilde f_m]\tilde v_j \\
                            &= \lim_{m\rar \infty}  [\tilde f_m, U\tilde f_m,\ldots, U^{k-1}\tilde f_m]\tilde C\tilde v_j \\
                            &= \lim_{m\rar \infty}  [U\tilde f_m, U^2\tilde f_m,\ldots, U^{k}\tilde f_m]\tilde \lambda_j\tilde v_j\\
                            &= \lim_{m\rar \infty}  \tilde \lambda_j w_j\\
                            & = \lambda_j w_j.
\end{align*}	
Therefore, $w_j$ converges to the sampling of values of the eigenfunction associated with eigenvalue $\lambda_j$.
\end{proof}

\section{Singular Value Decomposition (SVD) and Proper Orthogonal Decomposition (POD) for ergodic systems}
\label{sec:SVD-POD}
Singular Value Decomposition (SVD) is a central algorithm of linear algebra that lies at the heart of many data analysis techniques for dynamical systems including linear subspace identification methods \cite{van2012subspace} and DMD \cite{Schmid:2010,tu2014dynamic}. Proper Orthogonal Decomposition (POD), on the other hand, is a data analysis technique frequently used for complex and high-dimensional dynamical systems. Also known as Principal Component Analysis (PCA), or Karhonen-Loeve decomposition, POD yields an orthogonal basis for representing ensemble of observations which is optimal with respect to a pre-defined inner product.
It is known that for finite-dimensional observables on discrete-time dynamical systems, POD reduces to SVD \cite{holmes2012turbulence}. Here, we establish a slightly different connection between these two concepts in the case of ergodic systems.
Our motivation for derivation of these results is the role of SVD in DMD algorithms, however, the orthogonal basis that is generated by this process can be used for further analysis of dynamics in the space of observables, for example, to construct a basis for computing the eigenfunctions of the Koopman generator as in \cite{giannakis2015data}. We first review POD and then record our main result in \cref{prop:SVD-POD}.

Let $\mathcal{F}=[f_1,~f_2,~\ldots,~f_n]$ be an ensemble of observables in the Hilbert space $\h$ which spans a $k-$dimensional subspace. Applying POD to $\mathcal F$ yields the expansion
\begin{align}
\mathcal{F}&=\Psi \Sigma V^*, \label{eq:POD}
\\
           &=[\psi_1,\psi_2,\ldots,\psi_k]
\left[
\begin{matrix}
\s_1 &  0  & \ldots & 0\\
0 &  \s_2 & \ldots & 0 \\
\vdots & \vdots & \ddots &   \vdots\\
0  &   0       &\ldots & \s_k
\end{matrix}\right]
\left[
  \begin{array}{ccc}
    \horzbar & v^{*}_{1} & \horzbar \\
    \horzbar & v^{*}_{2} & \horzbar \\
             & \vdots    &          \\
    \horzbar & v^{*}_{k} & \horzbar
  \end{array}
\right],
\nonumber
\end{align}
where $\psi_j$'s, $j=1,2,\ldots,k$ form an orthonormal basis for $\spn\{\mathcal F\}$, and are often called the \emph{empirical orthogonal functions} or \textit{POD basis} of $\mathcal{F}$.
The diagonal elements of $\Sigma$, denoted by $\sigma_j,j=1,2,\ldots,k$ are all positive and signify the $\h$-norm contribution of the basis element $\psi_j$ to the $\mathcal{F}$. The columns of $V$, denoted by $v_i$ and called the \emph{principal coordinates}, are the normalized coordinates of vectors in $\mathcal{F}$ with respect to the POD basis. This decomposition can be alternatively written as a summation,
\begin{align}
f_i= \sum_{j=1}^{k} \sigma_j \psi_j v_{ji}.\label{eq:PODsum}
\end{align}
If we index the principal coordinates such that $\sigma_1>\sigma_2>\ldots>\sigma_k>0$, then this decomposition minimizes the expression
\begin{align*}
e_p= \frac{1}{n}\sum_{i=1}^{n} \left\|\sum_{j=1}^{p} \sigma_j \psi_j v_{ji}-f_i\right\|_\h
\end{align*}
for any $p\leq k$, over the choice of all orthonormal bases for $\spn\{\mathcal{F}\}$. The term $e_p$ denotes the average error in approximating $f_i$'s by truncating the sum in \cref{eq:PODsum} at length $p$. This property, by design, guarantees that low-dimensional representations of $\mathcal{F}$ using truncations of POD involves the least $\|\cdot\|_\h$-error compared to other choices of orthogonal decomposition.

An established method for computation of POD is \emph{the method of snapshots} \cite{sirovich1987turbulence}: we first form the Gramian matrix $G$, given by $G_{ij}=<f_i,f_j>_\h$. The columns of $V$ are given as the normalized eigenvectors of $G$ associated with its non-zero eigenvalues, and those non-zero eigenvalues happen to be $\s_i^2$'s, that is,
\begin{align}
  G V=V\Sigma^2. \label{eq:PODG}
\end{align}
Since $G$ is a symmetric real matrix, $V$ will be an orthonormal matrix and the decomposition in \cref{eq:POD} can be easily inverted to yield the orthonormal basis functions,
  \begin{align*}
    \psi_j=\frac{1}{\sigma_j}\mathcal{F}v_j.
  \end{align*}
The SVD of tall rectangular matrix has a similar structure to POD. Consider $X_{m\times n}$, with $m>n$, to be a matrix of rank $r$. The \emph{reduced SVD} of $X$ is
\begin{align}
X& =WS\tilde V^* \label{eq:StandardSVD}\\
  &=
  \left[
  \begin{array}{cccc}
  \vertbar & \vertbar &        & \vertbar \\
  w_{1}    & w_{2}    & \ldots & w_{r}    \\
  \vertbar & \vertbar &        & \vertbar
  \end{array}
  \right]
  \left[
  \begin{matrix}
  s_1 &  0  & \ldots & 0\\
  0 &  s_2 & \ldots & 0 \\
  \vdots & \vdots & \ddots &   \vdots\\
  0  &   0       &\ldots & s_r
  \end{matrix}\right]
  \left[
  \begin{array}{ccc}
  \horzbar & \tilde v^{*}_{1} & \horzbar \\
  \horzbar & \tilde v^{*}_{2} & \horzbar \\
  & \vdots    &          \\
  \horzbar & \tilde v^{*}_{r} & \horzbar
  \end{array}
  \right]\nonumber
\end{align}
where $W$ and $V$ are orthonormal matrices, and $S$ is a diagonal matrix holding the  singular values $s_1>s_2>\ldots>s_r>0$. The columns of $W$ and $V$ are called, respectively, left and right singular vectors of $X$.

The usual practice of POD in data analysis is to let $\h$ be the space of snapshots, e.g. $\mathbb{R}^m$ equipped with the usual Euclidean inner product, which makes POD and SVD identical \cite{holmes2012turbulence}. In that case, $\mathcal{F}$ would be a  snapshot matrix such as \cref{eq:DataMatrix} and its left singular vectors are the POD basis.
We are interested in $\h$, however, as the infinite-dimensional space of observables defined in \cref{eq:HilbertDef}. Given the sampling of a set of observables on a single ergodic trajectory, our computational goal is to get the sampling of the orthonormal basis functions for the subspace of $\h$ spanned by those observables. The next proposition shows that this can be achieved by applying SVD to a data matrix whose columns are ergodic sampling of those observables. Incidentally, such matrix is the transpose of a snapshot matrix!


\begin{proposition}[\textbf{Convergence of SVD to POD for ergodic systems}]\label{prop:SVD-POD}

Let $\mathcal{F}=[f_1,~f_2,~\ldots,~f_n]$ be an ensemble of observables on the ergodic dynamical system in \cref{discDSGen}. Assume $\mathcal{F}$ spans a $k-$dimensional subspace of $\h$ and let
  \begin{align*}
    \mathcal{F} = \Psi \Sigma V^*,
  \end{align*}
 be the POD of $\mathcal{F}$. Now consider the data matrix,
    \begin{align}
\tilde F = \left(
\begin{matrix}
f_1(z_0) &  f_2(z_0)  & \ldots & f_n(z_0)\\
f_1\circ T(z_0) &  f_2\circ T(z_0) & \ldots & f_n \circ T(z_0)\\
\vdots & \vdots & \ddots &   \vdots\\
f_1\circ T^{m-1}(z_0)  &   f_2\circ T^{m-1}(z_0)       &\ldots & f_n \circ T(z_0)
\end{matrix}\right)
 \label{eq:Ftilde}
\end{align}
and let
\begin{align*}
  \frac{1}{\sqrt{m}}\tilde F \approx  W   S  V^*
\end{align*}
be the reduced SVD of $(1/\sqrt{m})\tilde F$.

Then, for almost every $z_0$, as $m\rar \infty$,
\newline (a) $ s_j\rar \sigma_j$ for $j=1,2,\ldots,k$ and $ s_j\rar 0$ for $ j=k+1,\ldots,n$.
\newline (b) $\tilde{v}_j \rar v_j$ for $j=1,2,\ldots,k$.
\newline (c) $\sqrt{m}w_j$ converges to the sampling of $\psi_j$ along the trajectory starting at $z_0$ $j=1,2,\ldots,k$.
\end{proposition}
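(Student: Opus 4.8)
The plan is to reduce the whole statement to the \emph{method of snapshots}, turning it into a question about convergence of a single sequence of $n\times n$ symmetric Gram matrices, and then to invoke continuity of the spectrum exactly as in the proof of \cref{prop:Hankel-DMD}. Concretely, I would introduce the numerical Gram matrix $\tilde G:=\tfrac1m\tilde F^{*}\tilde F$, whose $(i,j)$ entry is the Euclidean inner product $\tfrac1m\langle \tilde f_{i,m}(z_0),\tilde f_{j,m}(z_0)\rangle$ of the $m$-sample vectors of $f_i$ and $f_j$. From the reduced SVD $\tfrac1{\sqrt m}\tilde F=WS\tilde V^{*}$ one reads off $\tilde G=\tilde V S^{2}\tilde V^{*}$, so the $\tilde v_j$ are orthonormal eigenvectors of $\tilde G$ with eigenvalues $s_j^{2}$; on the Hilbert-space side, \cref{eq:PODG} says the $v_j$ are orthonormal eigenvectors of $G$ ($G_{ij}=\langle f_i,f_j\rangle_{\h}$) with eigenvalues $\sigma_1^{2}>\dots>\sigma_k^{2}>0$ and, since $\mathrm{rank}\,G=k$, with eigenvalue $0$ of multiplicity $n-k$. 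The essential input is \cref{eq:innerproduct}: for almost every $z_0$, $\tilde G_{ij}\to G_{ij}$ as $m\to\infty$ for every $i,j$, hence $\tilde G\to G$ in any matrix norm.

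Part (a) is then immediate from continuity of the eigenvalues of a symmetric matrix (Weyl's inequality): the $j$-th largest eigenvalue $s_j^{2}$ of $\tilde G$ tends to the $j$-th largest eigenvalue of $G$, which equals $\sigma_j^{2}$ for $j\le k$ and $0$ for $j>k$; taking square roots gives the claim. For part (b), the hypothesis $\sigma_1>\dots>\sigma_k>0$ makes each of the top $k$ eigenvalues simple, so the corresponding one-dimensional eigenspaces depend continuously on the matrix (standard matrix perturbation theory); fixing a sign/phase normalization for $\tilde v_j$ yields $\tilde v_j\to v_j$ for $j\le k$. No individual convergence is claimed for $j>k$, where $0$ is a degenerate eigenvalue of $G$; if some nonzero $\sigma_j$ happen to coincide, the correct statement becomes convergence of the spectral projections onto the respective eigenspaces, and likewise in (c).

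For part (c), the SVD identity $\tfrac1{\sqrt m}\tilde F\,\tilde v_j=s_j w_j$ gives $\sqrt m\,w_j=\tfrac1{s_j}\tilde F\tilde v_j$. Because sampling along the trajectory is linear, $\tilde F\tilde v_j$ is precisely the $m$-sample vector of the observable $\sum_{i=1}^{n}\tilde v_{ij}f_i$, so the $\ell$-th component of $\sqrt m\,w_j$ is $\tfrac1{s_j}\sum_{i=1}^{n}\tilde v_{ij}\,f_i\circ T^{\ell-1}(z_0)$. For each fixed $\ell$ the numbers $f_i\circ T^{\ell-1}(z_0)$ are constants, so by (a) and (b) this converges to $\tfrac1{\sigma_j}\sum_{i=1}^{n}v_{ij}\,f_i\circ T^{\ell-1}(z_0)$, which is exactly the $\ell$-th sample of $\tfrac1{\sigma_j}\mathcal F v_j=\psi_j$. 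Hence $\sqrt m\,w_j$ converges, componentwise (equivalently, on every fixed finite time window), to the sampling of $\psi_j$ along the trajectory from $z_0$.

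The only delicate point is the bookkeeping in (c): $\tilde F$, $w_j$, and the target sampling of $\psi_j$ all live in the $m$-dependent space $\R^{m}$ and have norms growing like $\sqrt m$, so ``convergence'' must be read in the finite-window / componentwise sense, and the sign (phase) of $w_j$ must be slaved to the one chosen for $\tilde v_j$ through $\tfrac1{\sqrt m}\tilde F=WS\tilde V^{*}$. A secondary caveat is the distinct-singular-value assumption needed for (b) and (c). Apart from these, the argument is a direct transcription of the continuity-of-spectrum reasoning already used for \cref{prop:Hankel-DMD}, now applied to the symmetric Gram matrices $\tilde G\to G$ rather than to a companion matrix.
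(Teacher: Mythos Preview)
Your proposal is correct and follows essentially the same route as the paper: both introduce the numerical Gramian $\tilde G=\tfrac1m\tilde F^{*}\tilde F$, invoke \cref{eq:innerproduct} to get $\tilde G\to G$ entrywise, and then read off (a) and (b) from continuity of eigenvalues and eigenvectors of symmetric matrices. The only stylistic difference is in (c): the paper defines candidate functions $\tilde\psi_j=\tfrac1{s_j}\mathcal F\tilde v_j\in\h$ and shows $\tilde\psi_j\to\psi_j$ in $\h$ via an orthogonal projection matrix $P$ (weak convergence upgraded to strong in finite dimensions), whereas you argue the componentwise convergence of $\sqrt m\,w_j$ directly from $s_j\to\sigma_j$ and $\tilde v_j\to v_j$; the two arguments are equivalent, and your caveats about sign/phase normalization and the distinct-singular-value assumption are exactly the implicit assumptions the paper is using as well.
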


\begin{proof} Consider the numerical Gramian matrix
  $$\tilde G:= (\frac{1}{\sqrt{m}}\tilde F)^*(\frac{1}{\sqrt{m}}\tilde F)= \frac{1}{m} \tilde F^* \tilde F.$$
 As shown in the proof of \cref{prop:Hankel-DMD}, the assumption of ergodicity implies the convergence of $\tilde G$ to the Gramian matrix $G$ in \cref{eq:PODG} as $m\rar \infty$.
 Now denote by $\lambda_j,~j=1,2,\ldots,k$, the $k$ eigenvalues of $\tilde G$ that converge to positive eigenvalues of $G$, and denote by $\lambda'_j,~j=1,\ldots,n-k$, the eigenvalues of $\tilde G$ that converge to zero. There exists $m_0$ such that for any $m>m_0$, all the singular values $s_j=\sqrt {\lambda_j}$ are larger than $s'_j=\sqrt {\lambda'_j}$ and therefore occupy the first $k$ diagonal entries of $S$, which completes the proof of statement in (a).

The statement in (b) follows from the convergence of the normalized eigenvectors of $\tilde G$ associated with $\lambda_j$'s to those of $G$.
To show that (c) is true, we first construct the candidate functions by letting
\begin{align}
    \tilde{\psi}_j =\frac{1}{s_j}\mathcal{F}\tilde{v}_j,~j=1,2,\ldots,k \label{eq:Psican}
 \end{align}
We compute the entries of the orthogonal projection matrix $P$ defined by $P_{i,j}=<\psi_i,\tilde \psi_j>$ and consider its limit as $m\rar\infty$,
  \begin{align}\label{eq:matrixP}
   \lim_{m\rar\infty} P_{i,j}& =\lim_{m\rar\infty} (\frac{1}{\s_i}\mathcal{F}v_i)^*(\frac{1}{s_j}\mathcal{F}\tilde{v}_j)
   =  \frac{1}{\s_i}v^*_i\mathcal{F}^*\mathcal{F}\lim_{m\rar\infty}\frac{1}{s_j}\tilde{v}_j
  = \frac{1}{\s^2_i}v^*_iGv_j=\delta_{ij}
  \end{align}
In the last equality, we have used \cref{eq:PODG}. This calculation shows that $\tilde \psi_j$'s are weakly convergent to $\psi_j$'s, and since they belong to the same finite-dimensional space, that implies strong convergence as well.
Noting the definition of left singular vector
  \begin{align}
    w_j = \frac{1}{\sqrt{m}s_j} \tilde F v_j, \label{eq:W}
  \end{align}
  it is easy to see that $\sqrt{m}w_j$ is the sampling of the candidate function $\tilde {\psi}_j$ along the trajectory starting at $z_0$.
\end{proof}


\subsection{Representation of chaotic dynamics in space of observables using Hankel matrix and POD}
Using the above results, we are able to construct an orthonormal basis on the state space from the time series of an ergodic system and give a new data-driven representation of chaotic dynamical systems.
Recall that the columns of the Hankel matrix $\tilde H$ defined in \cref{eq:Hankel} provides an ergodic sampling of the Krylov sequence of observables  $\mathcal{F}_n=[f,~Uf,~,U^{n}f]$ along a trajectory in the state space.
Therefore by applying SVD to $\tilde H$, we can approximate an orthonormal basis for $\mathcal{F}_n$, and furthermore, represent the Koopman evolution of observable $f$ in the form of principal coordinates.

We show an example of this approach using the well-known chaotic attractor of the Lorenz system \cite{lorenz:1963}. This attractor is proven to have the mixing property which implies ergodicity \cite{luzzatto2005lorenz}.
The Lorenz system is given by
\begin{align*}
\dot{z}_1 &= \sigma (z_2-z_1),\\
\dot{z}_2 &= z_1 (\rho-z_3)-z_2,\\
\dot{z}_3 &= z_1z_2-\beta z_3,
\end{align*}
with $[z_1,z_2,z_3]\in\mathbb{R}^3$ and parameter values $\s=10,~\rho=28$ and $\beta=8/3$. First, we sample the value of observable $f(\z)=z_1$ every $0.01$ seconds on a random trajectory, and then form a tall Hankel matrix as in \cref{eq:Hankel} with $m=10000$ and  $n=500$.
 \Cref{fig:LorenzPODbasis} shows the values of first six left singular vectors of the Hankel matrix, which approximate the basis functions $\psi_j$. The corresponding right singular vectors, shown in  \cref{fig:LorenzPODcoordinates}, approximate the principal coordinates of $\mathcal{F}_n$.
We make note that the computed basis functions and their associated singular values show little change with $m$ for $m\geq10000$.

\begin{figure}[ht]
	\centerline{\includegraphics[width=1.1 \textwidth]{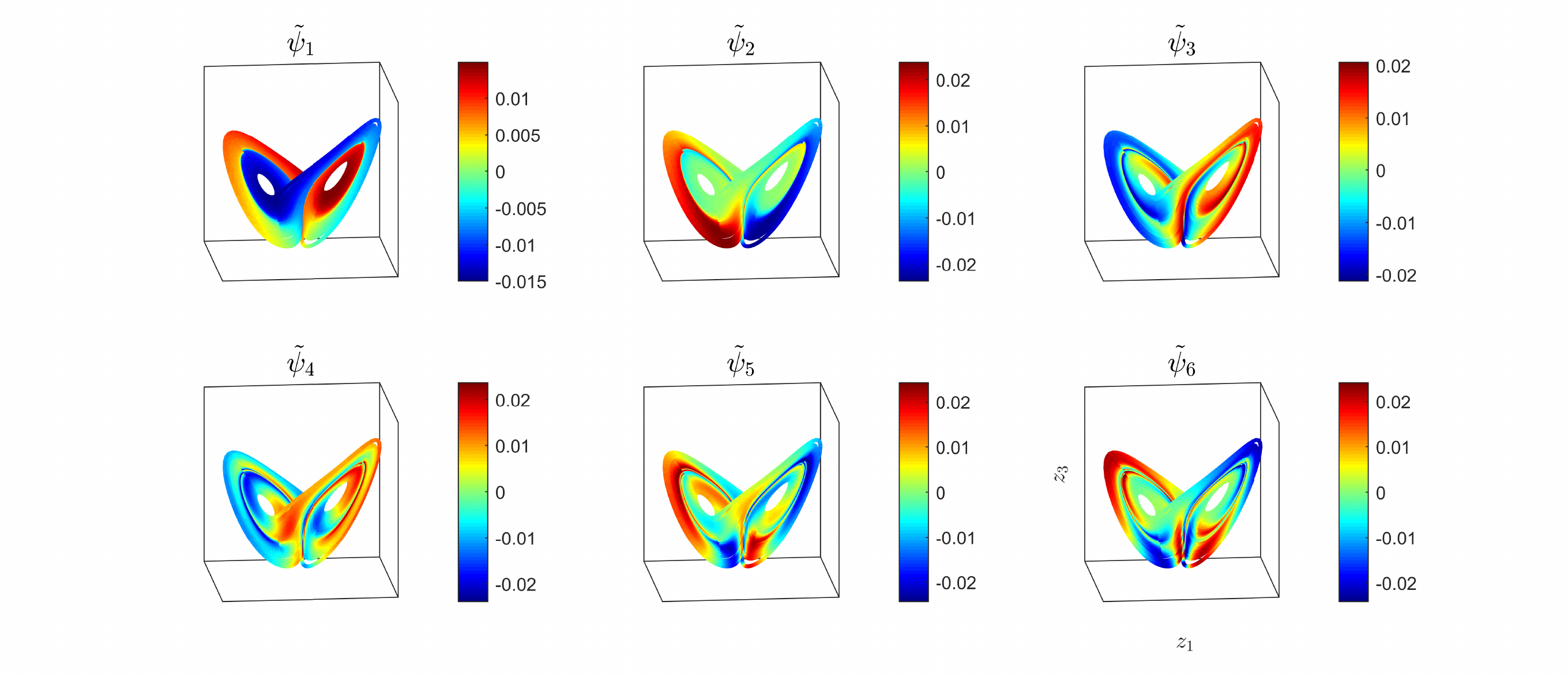}}
	\caption{The first six POD basis functions of $[f,~Uf,\ldots,U^if,\ldots,U^{500}f]$. The observable is $f(z_1,z_2,z_3)=z_1$ and the Hankel matrix has the dimensions $m=10000$ and $n=500$. }
	\label{fig:LorenzPODbasis}
\end{figure}

\begin{figure}[ht]
	\centerline{\includegraphics[width=1.1 \textwidth]{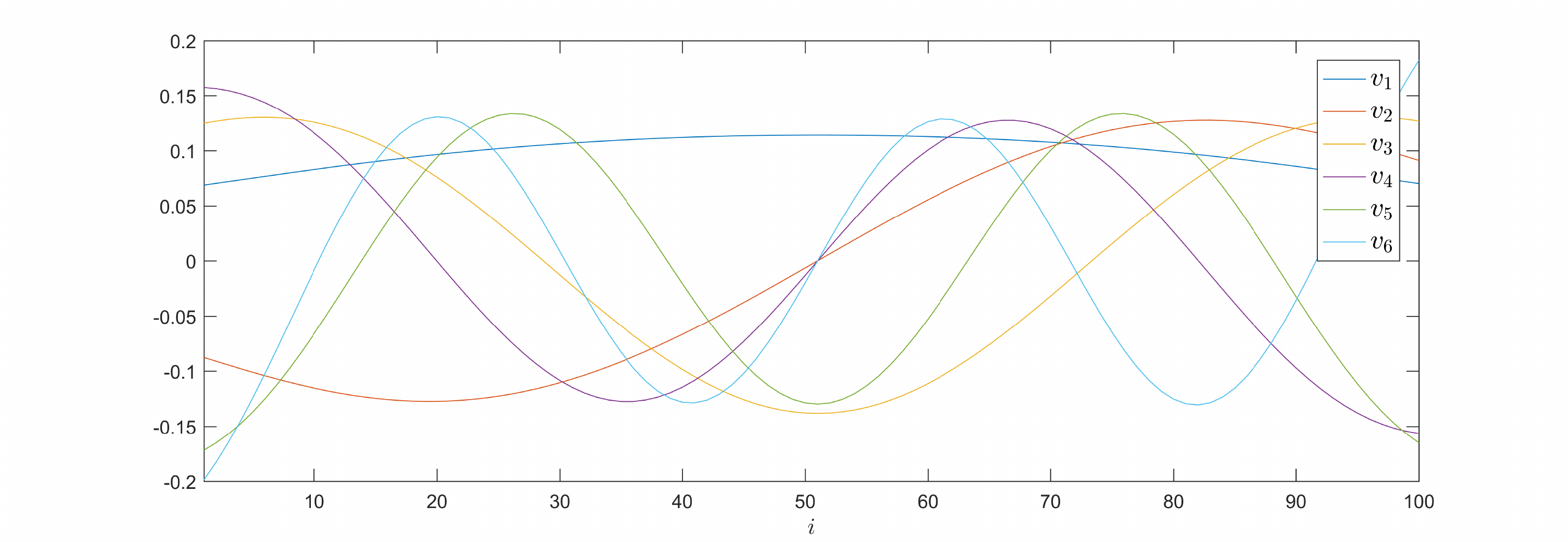}}
	\caption{The principal coordinates for $[f,~Uf,\ldots,U^if,\ldots,U^{500}f]$ with $f(z_1,z_2,z_3)=z_1$ in the basis shown in \cref{fig:LorenzPODbasis}. }
	\label{fig:LorenzPODcoordinates}
\end{figure}

In mixing attractors such as Lorenz, the only discrete eigenvalue for the Koopman operator is $\lambda=1$, which is associated with eigenfunction that is constant almost everywhere on the attractor. In this case, the Koopman operator cannot have any invariant finite-dimensional subspace other than span of almost-everywhere constant functions.
As a result, for a typical observable $f$, the Krylov sequence $\mathcal{F}_n$ is always $n+1$-dimensional and growing with iterations of $U$. This observation shows that despite the fact that evolution of principal coordinates in \cref{fig:LorenzPODcoordinates} is linear, there is no finite-dimensional linear system that can describe their evolution.

\Cref{prop:SVD-POD} offers a lifting of coordinates from state space to the space of observables which can be useful in the study of chaotic systems. Application of SVD to embedded time series in the form of Hankel matrix is a popular technique in the study of climate time series under the name of Singular Spectrum Analysis (SSA) \cite{ghil2002advanced}. This method is frequently used for pattern extraction and spectral filtering of short and noisy time series. In a more recent example, Brunton and co-workers \cite{brunton2016chaos} have introduced a new framework based on the SVD of Hankel data matrix
to construct  a linear representation of several chaotic systems including Lorenz attractor. They have discovered that the evolution of principal coordinates in some classes of chaotic systems - including Lorenz attractor -  can be described via a low-dimensional linear system with intermittent forcing. By using such model, they were able to predict the highly nonlinear behavior of those systems over short time windows given the knowledge of the forcing. This suggests that replacing the state-space trajectory-based analysis with the evolution of coordinates in space of observable gives a more robust representation of the dynamics for analysis and control purposes.

\section{Convergence of Exact DMD and extension to multiple observables} \label{sec:ExactDMD}
In this section, we review the functional setting for Exact DMD and prove its convergence for ergodic systems using the assumption of invariant subspace. We then discuss using this method combined with Hankel matrices to compute the Koopman spectra from observations on single or multiple observables. The summary of the numerical algorithm with several examples will be given in the next section.

Let $\mathcal{F}=:[f_1,f_2,\ldots,f_n]$ denote a set of observables defined on the discrete dynamical system in \cref{discDSGen}. We make the assumption that $\mathcal{F}$ spans a $k-$dimensional subspace of $\h$, with $k\leq n$, which is invariant under the Koopman operator.
Also denote by $U\mathcal{F}=:[Uf_1,Uf_2,\ldots,Uf_n]$ the image set of those observables under the action of the Koopman operator. We seek to realize the Koopman operator, restricted to this subspace, as a $k$-by-$k$ matrix, given the knowledge of $\mathcal{F}$ and $U\mathcal{F}$.

Let
\begin{align}
  \mathcal{F}=\Psi \Sigma V^*. \label{eq:F1POD}
\end{align}
be the POD of ensemble $\mathcal{F}$ with $\Psi=[\psi_1,~\psi_2,\ldots,~\psi_k]$ denoting its POD basis.
Since $\mathcal{F}$ spans an invariant subspace, the functions in $U\mathcal{F}$ also belong to the same subspace and their principal coordinates can be obtained by orthonormal projection, i.e.,
\begin{align}
  \Omega=\Psi^*U\mathcal{F}. \label{eq:Omega}
\end{align}
The restriction of the Koopman operator to the invariant subspace is then given by matrix $A$ which maps the columns of $\Sigma V^*$ to the columns of $\Omega$. The following lemma, which summarizes some of the results in \cite{tu2014dynamic}, gives the explicit form of matrix $A$ and asserts its uniqueness under the prescribed condition on $\mathcal{F}$ and $U\mathcal{F}$.

\begin{lemma}
  Let $X_{k\times n}$ with $n\geq k$ be a matrix whose range is equal to $\mathbb{R}^k$. Let $Y_{k\times n}$ be another matrix which is linearly consistent with $X$, i.e., whenever $Xc=0$ for $c\in \mathbb{R}^n$, then $Yc=0$ as well. Then the Exact DMD operator $A:=YX^{\dagger}$ is the unique  matrix that satisfies $AX=Y$.
\end{lemma}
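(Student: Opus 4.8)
The plan is to establish existence and uniqueness separately, relying only on elementary properties of the Moore--Penrose pseudo-inverse. The key facts I would invoke are: (i) for any real matrix $X$, the product $X^\dagger X$ is the orthogonal projector of $\mathbb{R}^n$ onto the row space $(\ker X)^\perp$, so that $I_n - X^\dagger X$ is the orthogonal projector onto $\ker X$ and in particular each of its columns lies in $\ker X$; and (ii) a matrix $X_{k\times n}$ whose range equals $\mathbb{R}^k$ has full row rank, whence $X^\dagger = X^*(XX^*)^{-1}$ and $XX^\dagger = I_k$.

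For existence I would set $A := YX^\dagger$ and compute $AX = YX^\dagger X = Y - Y(I_n - X^\dagger X)$. Since every column of $I_n - X^\dagger X$ belongs to $\ker X$, and the linear-consistency hypothesis says exactly that $Yc = 0$ whenever $Xc = 0$, the term $Y(I_n - X^\dagger X)$ vanishes, giving $AX = Y$.

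For uniqueness I would take any two $k\times k$ matrices $A_1, A_2$ with $A_1 X = Y = A_2 X$, so that $(A_1 - A_2)X = 0$, i.e. $\operatorname{range}(X) \subseteq \ker(A_1 - A_2)$. Because $\operatorname{range}(X) = \mathbb{R}^k$ by hypothesis, the kernel of $A_1 - A_2$ is all of $\mathbb{R}^k$, which forces $A_1 = A_2$.

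I expect the only genuinely substantive point to be the identification of $I_n - X^\dagger X$ with the orthogonal projector onto $\ker X$, since this is precisely where the "linearly consistent" assumption gets used; everything else is routine manipulation of the defining identities of the pseudo-inverse. A pseudo-inverse-free variant is also available — observe that any two right inverses of the surjective map $X$ differ by a matrix whose columns lie in $\ker X$, on which $Y$ vanishes — but the route above is the cleanest and directly yields the stated formula $A = YX^\dagger$.
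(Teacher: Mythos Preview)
Your proposal is correct and follows essentially the same route as the paper. The uniqueness argument is identical in spirit (the paper picks an arbitrary $b\in\mathbb{R}^k$, writes $b=Xd$ using surjectivity, and checks $\tilde A b = Ab$); for existence the paper simply cites an external result (theorem~2 in \cite{tu2014dynamic}), whereas you supply the self-contained computation $AX = Y - Y(I_n - X^\dagger X) = Y$ via the projector property of $X^\dagger X$, which is a welcome addition.
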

\begin{proof}
  First, we note that condition of $Y$ being linearly consistent with $X$ implies that $A$ satisfies $AX=Y$ (theorem 2 in \cite{tu2014dynamic}).
  To see the uniqueness, let $\tilde A$ be another matrix which satisfies $\tilde A X=Y$.
  Now let $b\in \mathbb{R}^k$ be an arbitrary vector. Given that $X$ spans $\mathbb{R}^k$, we can write $b=Xd$ for some $d\in \mathbb{R}^n$. Consequently,
  \begin{align*}
    \tilde A b = \tilde AXd = Yd = A Xd= A b,
  \end{align*}
  which means that action of $A$ and $\tilde A$ on all elements of $\mathbb{R}^k$ is the same, therefore $\tilde A=A$.
\end{proof}
Since $\mathcal{F}$ and $U\mathcal{F}$ are related through a linear operator, their principal coordinates with respect to the same orthogonal basis satisfy the condition of linear consistency, and the Koopman operator restricted to the invariant subspace is represented by the matrix
\begin{align}
  A = \Omega (\Sigma V^*)^\dagger = \Omega V\Sigma^{-1}, \label{eq:ExDMDOp}
\end{align}
where we have used the fact that $\Sigma$ is diagonal and $V$ is orthonormal. Let $(w_j,\lambda_j),~j=1,2,\ldots,k$ denote the eigenvector-eigenvalue pairs of $A$. Then $\lambda_j$'s are the Koopman eigenvalues and the associated Koopman eigenfunctions are given by
\begin{align}
  \phi_j = \Psi w_j,\quad j=1,2,\ldots,k.
\end{align}
Now we can assert the convergence of Exact DMD projected modes and eigenvalues to Koopman eigenfunctions and eigenvalues given the ergodic sampling of functions in $\mathcal{F}$ and $U\mathcal{F}$.

\begin{proposition}[\textbf{Convergence of Exact DMD for ergodic sampling}]\label{prop:ExactDMD}
  Let $\mathcal{F}=:[f_1,f_2,\ldots,f_n]$ denote a set of observables that span a $k-$dimensional invariant subspace of the Koopman operator with $k\leq n$, defined on the dynamical system \cref{discDSGen} which is ergodic. Now consider the data matrices
  \begin{align*}
    X=\left[
    \begin{matrix}
      f_1(z_0) & f_2(z_0) & \ldots & f_n(z_0)\\
      f_1\circ T(z_0) & f_2\circ T(z_0) & \ldots & f_n\circ T(z_0)\\
      \vdots & \vdots & \ddots & \vdots\\
      f_1\circ T^{m-1}(z_0) & f_2\circ T^{m-1}(z_0) & \ldots & f_n\circ T^{m-1}(z_0)\\
    \end{matrix}
      \right]
  \end{align*}
and
  \begin{align*}
    Y=\left[
    \begin{matrix}
      f_1\circ T(z_0) & f_2\circ T(z_0) & \ldots & f_n\circ T(z_0)\\
      f_1\circ T^2(z_0) & f_2\circ T^2(z_0) & \ldots & f_n\circ T^2(z_0)\\
      \vdots & \vdots & \ddots & \vdots\\
      f_1\circ T^m(z_0) & f_2\circ T^m(z_0) & \ldots & f_n\circ T^m(z_0)\\
    \end{matrix}
      \right]
  \end{align*}
   Let $\lambda_j$ and $\chi_j$ denote the dynamic eigenvalues and projected dynamic modes, respectively, obtained by applying Exact DMD (\cref{alg:EDMD}) to $X$ and $Y$, using a $k$-dimensional truncation of SVD (i.e. with discarding the $n-k$ smallest singular values) in step 2.

  Then, for almost every $z_0$, as $m\rar\infty$:
\newline (a) The dynamic eigenvalues converge to the Koopman eigenvalues.
\newline (b) $\sqrt{m}\chi_j$ for $j=1,2,\ldots,k$ converge to the sampling of Koopman eigenfunctions along the trajectory starting at $z_0$.

\end{proposition}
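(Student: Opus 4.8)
The plan is to reduce the statement to \cref{prop:SVD-POD} together with the closed form \cref{eq:ExDMDOp} for the Koopman restriction. First I would rewrite the Exact DMD matrix so that only Gramian-type quantities appear. Writing the rank-$k$ truncated reduced SVD as $X=WS\tilde V^{*}$, the defining relations give $W=X\tilde V S^{-1}$ and hence $W^{*}=S^{-1}\tilde V^{*}X^{*}$, so that
\begin{align*}
\hat A \;=\; W^{*}Y\tilde V S^{-1} \;=\; S^{-1}\,\tilde V^{*}\,(X^{*}Y)\,\tilde V\, S^{-1}.
\end{align*}
Every entry of $X^{*}Y$ is a finite average of terms $f_i\circ T^{l}(z_0)\,f_j\circ T^{l+1}(z_0)=f_i\circ T^{l}(z_0)\,(Uf_j)\circ T^{l}(z_0)$ with $l=0,\dots,m-1$, so by Birkhoff's theorem in the form \cref{eq:innerproduct}, applied to the pair $f_i,Uf_j\in\h$, the rescaled matrix $\frac{1}{m}X^{*}Y$ converges, for a.e.\ $z_0$, to the cross-Gramian $G'$ with $G'_{ij}=\langle f_i,Uf_j\rangle_{\h}$. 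By \cref{prop:SVD-POD}, $\frac{1}{\sqrt m}S\to\Sigma$ with $\Sigma$ invertible (the subspace is genuinely $k$-dimensional, so the discarded singular values tend to $0$ while the leading $k$ stay bounded away from $0$; the rank-$k$ truncation therefore keeps the correct singular directions for all large $m$), and $\tilde V\to V$. Since $S^{-1}=\frac{1}{\sqrt m}(\frac{1}{\sqrt m}S)^{-1}$, the powers of $m$ cancel and $\hat A\to\Sigma^{-1}V^{*}G'V\Sigma^{-1}$.

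Second, I would identify this limit with the matrix $A$ of \cref{eq:ExDMDOp}. Using the method-of-snapshots identity $\psi_i=\sigma_i^{-1}\mathcal F v_i$ one gets $\Omega=\Psi^{*}U\mathcal F=\Sigma^{-1}V^{*}G'$, and therefore $A=\Omega V\Sigma^{-1}=\Sigma^{-1}V^{*}G'V\Sigma^{-1}$, which is exactly the limit just obtained. The linear-consistency hypothesis that makes $A$ the genuine Koopman restriction is automatic here because $\mathcal F$ spans a Koopman-invariant subspace, so the columns of $Y$ lie in the span of the columns of $X$, as noted before \cref{eq:ExDMDOp}. Continuity of the spectrum under entrywise perturbation then yields part~(a): the eigenvalues of $\hat A$ converge to those of $A$, i.e.\ to the Koopman eigenvalues attached to the invariant subspace. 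Because Koopman eigenvalues are simple for ergodic systems, $A$ has $k$ distinct eigenvalues, so its normalized eigenvectors $w_j$ are the limits of the corresponding eigenvectors of $\hat A$.

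Third, for part~(b) I would push these eigenvectors through the mode formula. The projected dynamic modes of \cref{alg:EDMD} are $\chi_j=W w_j$, and \cref{prop:SVD-POD}(c) gives that $\sqrt m\,W$ converges columnwise to the sampling of $\psi_1,\dots,\psi_k$ along the trajectory through $z_0$; hence $\sqrt m\,\chi_j=(\sqrt m\,W)\,w_j$ converges to the sampling of $[\psi_1,\dots,\psi_k]\,w_j=\Psi w_j=\phi_j$, the Koopman eigenfunction associated with $\lambda_j$. As in \cref{prop:Hankel-DMD}, all conclusions hold for a.e.\ $z_0$, namely those initial conditions for which the relevant ergodic averages converge.

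The main obstacle I anticipate is careful bookkeeping rather than a conceptual gap: tracking the $\sqrt m$ normalizations as they pass through the pseudo-inverse/SVD steps of \cref{alg:EDMD}, and justifying that the rank-$k$ truncation is stable, i.e.\ that for all large $m$ the leading $k$ singular values are separated from the rest so that $S^{-1}$ and the truncated $\tilde V$ depend continuously on the data in the limit. Once \cref{prop:SVD-POD} is invoked, both this spectral gap and the convergence $\tilde V\to V$ are already in hand, and the remaining ingredients---continuity of eigenvalues and (using simplicity) eigenvectors, and the algebraic identity $A=\Sigma^{-1}V^{*}G'V\Sigma^{-1}$---are routine. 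A minor point worth spelling out in the final write-up is the simplicity of the limiting eigenvalues, which, as above, follows from the simplicity of Koopman eigenvalues for ergodic systems.
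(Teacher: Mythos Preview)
Your proof is correct and follows the same overall architecture as the paper's: reduce to showing $\hat A\to A$ by combining \cref{prop:SVD-POD} with Birkhoff's theorem, then deduce convergence of eigenvalues and of the projected modes. The difference lies in how you establish $\hat A\to A$. The paper keeps the factor $W^{*}$ intact, writes $\hat A=(\tfrac{1}{\sqrt m}W^{*}Y)\,\tilde V\,(\tfrac{1}{\sqrt m}S)^{-1}$, and then must show that $\tilde\Omega:=\tfrac{1}{\sqrt m}W^{*}Y$ converges to $\Omega=\Psi^{*}U\mathcal F$. Because the columns of $\sqrt m\,W$ sample the $m$-dependent candidate functions $\tilde\psi_i$ rather than the fixed $\psi_i$, this forces an $\epsilon/2$ argument: one expands $\tilde\psi_i=\sum_q P_{iq}\psi_q$ via the projection matrix $P$ from the proof of \cref{prop:SVD-POD} and controls the cross terms separately from the ergodic averages. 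Your substitution $W^{*}=S^{-1}\tilde V^{*}X^{*}$ collapses the problematic factor into the purely data-driven cross-Gramian $\tfrac{1}{m}X^{*}Y$, whose entrywise convergence to $G'$ is an immediate instance of \cref{eq:innerproduct}; the identification $A=\Sigma^{-1}V^{*}G'V\Sigma^{-1}$ is then a clean algebraic check. This is more elementary because it sidesteps the double-limit issue (both $\tilde\psi_i$ and the ergodic average depend on $m$) that the paper handles explicitly. Your treatment of part~(b) and the appeal to simplicity of Koopman eigenvalues for ergodic systems match the paper's reasoning.
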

\begin{proof}
  We first show that as $m\rar \infty$, the matrix $\tilde A$ constructed in the step 3 of the \cref{alg:EDMD} converges to the matrix $A$ in \cref{eq:ExDMDOp}. Let $X=WS\tilde V$ be the $k$-dimensional truncated SVD of $X$. It follows from the proof of \cref{prop:SVD-POD} that  $ S /\sqrt{m} \rar \Sigma$ and $\tilde V\rar V$ as $m\rar \infty$. Then
  \begin{align}
    \lim_{m\rar\infty}\tilde A &= \lim_{m\rar\infty} W^*Y\tilde V S^{-1}= \left(\lim_{m\rar\infty}\frac{1}{\sqrt{m}} W^*Y \right) V \Sigma^{-1}. \label{eq:tildeA1}
  \end{align}

We need to show that $\tilde \Omega :=1/\sqrt{m}W^*Y$ converges to $\Omega$ (defined in \cref{eq:Omega}) as $m\rar \infty$. To this end, recall the candidate functions in $\tilde \Psi$ defined in proof of \cref{prop:SVD-POD}. We see that
\begin{align}
  \tilde \Omega_{i,j} =  \frac{1}{\sqrt{m}}W_i^*Y_j =  \frac{1}{m}\sum_{l=0}^{m-1} (\tilde \psi_i \circ T^l(z_0))^* (Uf_j\circ T^l(z_0) ):= \frac{1}{m}\sum_{l=0}^{m-1} \tilde \psi_i^{l*} Uf_j^l \label{eq:OmegaTilde}
\end{align}
Since $\tilde\psi_i$'s are defined by linear combination of functions in $\mathcal{F}$, they lie in the span of $\Psi$. In fact, we have
\begin{align}
  \tilde \psi_{i} =\sum_{q=1}^k P_{i,q} \psi_q, \label{eq:PsiTExpansion}
\end{align}
where $P_{i,q}$ denotes an entry of the orthogonal projection matrix $P$ defined in the proof of \cref{prop:SVD-POD}. The computation in \cref{eq:matrixP} shows that as $m\rar\infty$, we have $P_{i,q}\rar 0$ if $q\neq i$ and $P_{i,i}\rar1$. Now we replace $\tilde \psi_i$ in \cref{eq:OmegaTilde} with its expansion in \cref{eq:PsiTExpansion}, while considering the term
\begin{align}
  d_1:&= \tilde\Omega_{i,j}-\frac{1}{m}\sum_{l=0}^{m-1} ( \psi_i \circ T^l(z_0))^* (Uf_j\circ T^l(z_0) ), \\
  &:= \tilde\Omega_{i,j}-\frac{1}{m}\sum_{l=0}^{m-1} \psi_i^{l*} Uf_j^l , \nonumber\\
  &= \sum_{q=1}^k P_{i,q} \frac{1}{m}\sum_{l=0}^{m-1} \psi_q^{l*} Uf_j^l - \frac{1}{m}\sum_{l=0}^{m-1} \psi_i^{l*} Uf_j^l \nonumber , \nonumber\\
    &= \sum_{q=1,~q\neq i}^k P_{i,q} \frac{1}{m}\sum_{l=0}^{m-1} \psi_q^{l*} Uf_j^l + (P_{i,i}-1)\frac{1}{m}\sum_{l=0}^{m-1} \psi_i^{l*} Uf_j^l. \nonumber
\end{align}
The convergence of sums over $l$ for $m\rar\infty$ in the last line is given by \cref{eq:innerproduct}. Combining that with the convergence of $P_{i,q}$'s, we can conclude that for every $\epsilon>0$, there exists $m_1$ such that for any $m>m_1$, we have $|d_1|<\epsilon/2$. Now consider the term
\begin{align}
d_2 &= \frac{1}{m}\sum_{l=0}^{m-1} \psi_i^{l*} Uf_j^l - \Omega_{i,j} \\
	&= \frac{1}{m}\sum_{l=0}^{m-1} \psi_i^{l*} Uf_j^l -<\psi_i,Uf_j> . \nonumber
\end{align}
The convergence in \cref{eq:innerproduct} again implies that for every $\epsilon>0$ there exists an $m_2$ such that for any $m>m_2$, we have $|d_2|<\epsilon/2$. Now it becomes clear that
\begin{align}
|\tilde\Omega_{i,j}-\Omega_{i,j}|=|d_1+d_2|\leq |d_1|+|d_2| = \epsilon,
\end{align}
for any $m>\max(m_1,m_2)$. This proves the convergence of $\tilde \Omega$ to $\Omega$, which, by revisiting \cref{eq:tildeA1}, means
\begin{align}
\lim_{m\rar\infty} \tilde A = A.
\end{align}

The eigenvalues of $\tilde A$ converge to the eigenvalues of $A$ which are the Koopman eigenvalues.
Let $\tilde w_j,~j=1,2,\ldots,k$ denote the normalized eigenvectors of $\tilde A$, and define the candidate functions $\tilde \phi_j = \tilde \Psi \tilde w_j = \Psi P \tilde w_j $.
Given that $P$ converges to $I$, and $\tilde w_j$ converges to $w_j$ as $m\rar\infty$, it follows that $\tilde \phi_j$ strongly converge to the Koopman eigenfunctions $\phi_j=\Psi w_j$.

Now note that $\sqrt{m}\chi_j=\sqrt{m}U\tilde w_j$ is the sampling of candidate eigenfunction $\tilde\phi_j$ along the trajectory, and therefore it converges to the sampling of the Koopman eigenfunction $\phi_j$ as $m\rar\infty$.
\end{proof}

{
Recall that in \cref{sec:VanderPol}, we extended the application of companion-matrix Hankel-DMD to the basin of attraction of an ergodic set  $A$ by assuming the invariant measure on $A$ is physical. An analogous extension of the above proposition can be stated using the same assumption:

\begin{corollary}\label{cor:EDMDbasin}
  \Cref{prop:ExactDMD} is also valid for $\nu-$almost every $z_0\in \mathcal{B}$ ($\nu$ is the Lebesgue measure), where $\mathcal{B}$ is the basin of attraction for the ergodic attractor $A$ of the dynamical system \cref{discDSGen}, given that
  \newline (i) The invariant measure on $A$ is a physical measure,
  \newline (ii) $\mathcal{F}|_A$ spans a $k$-dimensional invaraint subspace of $\h$, and
  \newline (iii) $\mathcal{F}$ is continuous over $\mathcal{B}$.
\end{corollary}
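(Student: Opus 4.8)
The plan is to re-run the proof of \cref{prop:ExactDMD} essentially verbatim, replacing every appeal to Birkhoff's theorem \cref{eq:ergodicavg} (equivalently the inner-product convergence \cref{eq:innerproduct}) with the physical-measure ergodic average \cref{eq:ergodicavg2}, in exactly the way \cref{prop:Hankel-DMD-basin} upgrades \cref{prop:Hankel-DMD}. The only point that needs real checking is that \cref{eq:ergodicavg2} is legitimately applicable to each of the time averages appearing in the argument, because --- unlike Birkhoff's theorem --- it requires the averaged function to be \emph{continuous} on $\mathcal{B}$.

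First I would collect the continuity facts. By hypothesis (iii) each $f_j$ is continuous on $\mathcal{B}$; since $\mathcal{B}$ is forward invariant and $T$ is continuous, $Uf_j=f_j\circ T$ is continuous on $\mathcal{B}$ too, and so are all products $f_i^* f_j$, $f_i^*\,Uf_j$, etc. Hence the numerical Gramian entries $\tilde G_{ij}=\frac1m\sum_{l=0}^{m-1}(f_i\circ T^l(z_0))^*(f_j\circ T^l(z_0))$ are time averages of continuous functions, so by \cref{eq:ergodicavg2} they converge, for $\nu$-a.e.\ $z_0\in\mathcal{B}$, to $\int_A f_i f_j^*\,d\mu = <f_i|_A,f_j|_A>_\h$; by hypothesis (ii) this limit is the Gramian $G$ of the $k$-dimensional invariant subspace in \cref{eq:PODG}. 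Thus $\tilde G\rar G$, and by the eigenvalue/eigenvector perturbation argument already used in the proof of \cref{prop:SVD-POD} (the top $k$ eigenvalues of $\tilde G$ stay bounded away from the $n-k$ vanishing ones once $m$ is large) we get $S/\sqrt m\rar\Sigma$ and $\tilde V\rar V$ on the retained singular vectors.

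Next I would carry the convergence $\tilde\Omega\rar\Omega$ through unchanged. The key observation is that the candidate POD functions $\tilde\psi_i=\frac1{s_i}\mathcal{F}\tilde v_i$ and the limiting ones $\psi_i=\frac1{\sigma_i}\mathcal{F}v_i$ are \emph{finite linear combinations of the $f_j$}, hence continuous on $\mathcal{B}$ by (iii); so are the products $\tilde\psi_i^*\,Uf_j$ and $\psi_i^*\,Uf_j$. Therefore the decomposition $\tilde\Omega_{i,j}-\Omega_{i,j}=d_1+d_2$ from the proof of \cref{prop:ExactDMD} still applies: the inner $l$-sums in $d_1$ and $d_2$ are time averages of continuous functions on $\mathcal{B}$, which converge by \cref{eq:ergodicavg2} to the corresponding $\h$-inner products, while the entries $P_{i,q}$ of the projection matrix converge to $\delta_{iq}$ exactly as in \cref{eq:matrixP} (that computation only uses $\tilde G\rar G$ and $\tilde V\rar V$, which we have just re-established). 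Hence $\tilde\Omega\rar\Omega$ and, via \cref{eq:tildeA1}, $\tilde A\rar A$, with $A$ the realization \cref{eq:ExDMDOp} of the Koopman operator on the invariant subspace. The eigenvalues of $\tilde A$ then converge to the Koopman eigenvalues, giving (a). For (b), the candidate eigenfunctions $\tilde\phi_j=\tilde\Psi\tilde w_j=\Psi P\tilde w_j$ converge strongly (in the finite-dimensional subspace) to $\phi_j=\Psi w_j$ because $P\rar I$ and $\tilde w_j\rar w_j$; and since $\tilde\phi_j$ and $\phi_j$ are both linear combinations of the $f_i$ with convergent coefficient vectors, this convergence is in fact pointwise on all of $\mathcal{B}$, so the sampled vector $\sqrt m\,\chi_j=\sqrt m\,\tilde\Psi\tilde w_j$ converges entrywise to the sampling along the trajectory from $z_0$ of the continuous extension of $\phi_j$ to $\mathcal{B}$.

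The main obstacle --- modest, but the one place care is needed --- is the continuity bookkeeping of the previous paragraph: one must verify that \emph{every} function whose time average is taken, namely the Gramian entries, the projection-matrix entries $P_{i,q}$, and the entries of $\tilde\Omega$, is continuous on all of $\mathcal{B}$ (not just on $A$), so that the physical-measure hypothesis (i) can be invoked via \cref{eq:ergodicavg2}. This is precisely what forces the blanket continuity requirement (iii) on the ensemble $\mathcal{F}$, paralleling the single-observable continuity condition in \cref{prop:Hankel-DMD-basin}; finite-dimensionality of the invariant subspace (ii) is what keeps all the intermediate objects ($\tilde\psi_i$, $\psi_i$, $\tilde\phi_j$, $\phi_j$) inside the span of the continuous $f_j$.
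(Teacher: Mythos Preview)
Your proposal is correct and follows exactly the approach the paper intends: its proof of the corollary is the single line ``The proof is similar to \cref{prop:Hankel-DMD-basin},'' i.e., re-run \cref{prop:ExactDMD} with Birkhoff averages replaced by the physical-measure averages \cref{eq:ergodicavg2}. Your explicit continuity bookkeeping---verifying that every time-averaged quantity (Gramian entries, projection-matrix entries, entries of $\tilde\Omega$) is built from continuous functions on $\mathcal{B}$ so that \cref{eq:ergodicavg2} applies---is precisely the content the paper leaves implicit, and it is the right thing to check.
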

\begin{proof}
  The proof is similar to \cref{prop:Hankel-DMD-basin}.
\end{proof}
}
Let us first consider the application of the above proposition using measurements on a single observable $f$. We can supplement those measurements using delay-embedding which is equivalent to setting $\mathcal{F}=[f,~Uf,\ldots,U^{n-1}f]$.
An ergodic sampling of $\mathcal{F}$ and $U\mathcal{F}$ is then given by data matrices
\begin{align*}
  X=\tilde H \qquad Y=U\tilde H
\end{align*}
where $\tilde H$ is the Hankel matrix defined in \cref{eq:Hankel} and $U\tilde H$ is the same matrix but shifted one step forward in time.
In such case, the Exact DMD reduces to the SVD-enhanced DMD as discussed in \cref{sec:IntroDMD}.

In case of multiple observables, we can combine the delay-embedded measurements of the observables with each other.
For example, let $f$ and $g$ be the only observables that could be measured on a dynamical system. Then, we let  $F_1=[f,~Uf,\ldots,~U^{l-1}f,~g,~Ug,\ldots,~U^{q-1}g]$ and data matrices would contain blocks of Hankel matrices, i.e.,
	\begin{align}
	X=\left[\begin{matrix}
		\tilde H_f & \tilde H_g
	\end{matrix}\right] ,\quad
		Y=\left[\begin{matrix}
		U\tilde H_f & U\tilde H_g
		\end{matrix}\right],
	\end{align}
The above proposition guarantees the convergence of the Exact DMD method if $l$ and $q$ are chosen large enough, e.g., $l,q>k+1$ where $k$ is the dimension of the invariant subspace containing $f$ and $g$.

In numerical practice, however, the block Hankel matrices need some scaling. For instance assume $\|g\|_\h\ll \|f\|_\h$. The POD basis that corresponds to the measurements on $g$ is associated with smaller singular values and might be discarded through a low-dimensional SVD truncation. To remedy this issue, we can use the fact that the ratio of the norm between observables in ergodic systems can be approximated from the measurements:
\begin{align}
\alpha:=\frac{\|f\|_\h}{\|g\|_\h} = \lim_{m\rar\infty} \frac{\|\tilde f_m\|}{\| \tilde g_m\| },
\end{align}
where $\tilde f_m$ and $\tilde g_m$ are the observation vectors defined in \cref{eq:observationVector}. The scaled data matrices in that case become
	\begin{align}
	X=\left[\begin{matrix}
	\tilde H_f & \alpha \tilde H_g
	\end{matrix}\right] ,\quad
	Y=\left[\begin{matrix}
	U\tilde H_f & \alpha U\tilde H_g
	\end{matrix}\right].
	\end{align}

\section{Numerical application of Hankel-DMD method}
\label{sec:applications}
\Cref{alg:HDMD} summarizes the Hankel-DMD method for extracting the Koopman spectrum from single or multiple observables.
This algorithm acts on Hankel matrices of the data in the form of
\begin{align}
			\tilde H^i &= \left(
			\begin{matrix}
				f_i(z_i) &  f_i\circ T(z_i)  & \ldots & f_i\circ T^n(z_i)\\
				f_i\circ T(z_i) &  f_i\circ T^2(z_i) & \ldots & f_i\circ T^{n+1}(z_i)\\
				\vdots & \vdots & \ddots &   \vdots\\
				f_i\circ T^{m-1}(z_i)  &   f_i\circ T^{m}(z_i)       &\ldots & f_i\circ T^{m+n-1}(z_i) \\
			\end{matrix}\right),
			\quad i=1,2,\ldots l, \label{eq:HData}
\end{align}
and $U\tilde H^i$ which is the same matrix shifted forward in time.
The data on observable $f_i$ is collected from a trajectory starting at $z_i$ which is in the basin of attraction of an ergodic attractor. Unlike the classical DMD algorithms in \cref{sec:IntroDMD}, the number of modes obtained by this method depends on the length of the signal and the dimension of subspace in which the observable lies.

  The rate of convergence for the Hankel-DMD can be established by considering the rate of convergence for ergodic averages. For periodic and quasi-periodic attractors, the error of approximating the inner products by \cref{eq:innerproduct} is generally bounded by $|c/m|$ for some $c\in \mathbb{R}$ \cite{mezic2002ergodic}. For strongly mixing systems, the rate of convergence slows down to $c/\sqrt{m}$. However, for the general class of ergodic systems convergence rates cannot be established \cite{krengel:1985}.
  As we will see in this section, a few hundred samples would be enough to determine the Koopman frequencies of periodic systems with great accuracy, while a few thousand would be enough for systems with a 2-torus attractor.

\begin{remark} \label{rmk:SVDthreshold}
   In proving the convergence of the Hankel-DMD algorithm, we have assumed the explicit knowledge of the dimension of the invariant subspace ($k$) that contains the observable. In numerical practice, $k$ can be found in the SVD step of the algorithm: \cref{prop:SVD-POD} showed that as $m\rar\infty$, the number of singular values converging to positive values is equal to $k$. Therefore, we can approximate the invariant subspace by counting the number of singular values that don't seem to decay to zero. We can implement this assumption in \cref{alg:HDMD} by hard-thresholding of the SVD in step 3, i.e., discarding the singular values that are smaller than a specified threshold. Such a threshold can be selected based on the desired numerical accuracy and considering the rate of convergence for ergodic averages which is discussed above.
 In case that the observable lies in an infinite-dimensional Koopman invariant subspace, we are going to assume that it lives in a finite-dimensional subspace down to a specific numerical accuracy. We can enforce this accuracy, again, by hard-thresholding the SVD.  In the following examples, we have chosen the hard threshold of SVD to be 1e-10.
\end{remark}

\begin{algorithm}
	\caption{Hankel DMD}
	\label{alg:HDMD}
	\begin{algorithmic}[1]
		\STATEx{Consider the Hankel matrices $\tilde H^i$'s defined in \cref{eq:HData}.}
		\STATE{Compute the scaling factors
			\begin{align*}
				\alpha_i = \frac{\|H^i_{n+1}\|}{\|H^1_{n+1}\|},\quad i=2,3,\ldots,l,
			\end{align*}
			where $H^i_{n+1}$ is the last column of $H^i$.
			}
				\STATE{Form the composite matrices	
					\begin{align*}
						X=\left[\begin{matrix}
							\tilde H_1 & \alpha_2\tilde H_2 &\ldots& \alpha_l\tilde H_l
						\end{matrix}\right] ,\quad
						Y=\left[\begin{matrix}
							U\tilde H_f & \alpha_2 U\tilde H_2& \ldots& \alpha_l U\tilde H_l
						\end{matrix}\right].
					\end{align*}			
					}
		\STATE{Compute the truncated SVD of $X$ (see \cref{rmk:SVDthreshold}):
			\begin{align*}
				X=WS\tilde V^*.
			\end{align*}}
			\STATE{ Form the matrix
				\begin{align*}
					\hat A=W^*Y\tilde V S^{-1}.
				\end{align*}}
				\STATE{Let $(\lambda_j,w_j),~j=1,2,\ldots,m$ be the eigenvalue-eigenvector pairs for $\hat A$. Then $\lambda_j$'s approximate the Koopman eigenvalues. }
					\STATE{ The dynamic modes $\chi_j$ given by
						\begin{align*}
							\chi_j=W w_j,\quad j=1,2,\ldots,m.
						\end{align*}
						approximate the Koopman eigenfunctions.}
					\end{algorithmic}
				\end{algorithm}

\subsection{Application to single observable: periodic and quasi-periodic cavity flow} \label{sec:Cavity}
In a previous study by the authors \cite{Hassan2016}, the lid-driven cavity flow was shown to exhibit periodic and quasi-periodic behavior at Reynolds numbers ($Re$) in the range of 10000-18000. The Koopman eigenvalues were computed by applying an adaptive combination of FFT and harmonic averaging to the discretized field of stream function, which is an observable with $\sim 4000$ values.
We use the Hankel-DMD method (\cref{alg:HDMD}) to extract the Koopman eigenvalues and eigenfunctions using a scalar valued observable (the kinetic energy) and compare our results with the following cases studied in \cite{Hassan2016}:

\begin{itemize}
	\item At $Re=13000$, the trajectory in the state space of the flow converges to a limit cycle with the basic frequency of $\omega_0=1.0042~rad/sec$. The Koopman frequencies in the decomposition of analytic observables are  multiples of the basic frequency, i.e., $k\omega_0,~k=0,1,2,\ldots$.
	\item At $Re=16000$, the post-transient flow is quasi-periodic with two basic frequencies $\omega_1=0.9762~rad/sec$ and $\omega_2=0.6089~rad/sec$. In this case, the flow trajectory wraps around a 2-torus in the state space of the flow and the Koopman frequencies are integral multiples of $\omega_1$ and $\omega_2$, that is $\omega = \mathbf{k}\cdot (\omega_1,\omega_2),~\mathbf{k}\in \mathbb{Z}^2$.
\end{itemize}

Let $\{E_i:=E(t_0+i\Delta t)\}$ denote the measurements on the kinetic energy of the flow at the time instants  $t_0+i\Delta t,~i=0,1,2,\ldots,s$. We first build the Hankel matrices of the kinetic energy observable,
\begin{align}
	\tilde{H}_E &= \left(
	\begin{matrix}
		E_0 &  E_1  & \ldots & E_n\\
		E_1 &  E_2 & \ldots &  E_{n+1}\\
		E_2 &  E_3 & \ldots &  E_{n+2}\\
		\vdots & \vdots & \ddots &   \vdots\\
		E_{m-1} &   E_m       &\ldots &  E_{m+n-1} \\
	\end{matrix}\right),
\quad
	U\tilde{H}_E &= \left(
	\begin{matrix}
		E_1 &  E_2  & \ldots & E_{n+1}\\
		E_2 &  E_3 & \ldots &  E_{n+2}\\
		E_3 &  E_4 & \ldots &  E_{n+3}\\
		\vdots & \vdots & \ddots &   \vdots\\
		E_{m} &   E_{m+1}       &\ldots &  E_{m+n} \\
	\end{matrix}\right).
	\label{eq:KEHankel}
\end{align}
and then apply \cref{alg:HDMD}. In computing the SVD of the Hankel matrix (step 2 in algorithm \cref{alg:SVDDMD}), we discard the singular values smaller than $1e-10$.
Due to the discrete-time nature of the measurements, the eigenvalues computed by this method correspond to the discrete map obtained by strobing the original continuous-time dynamical system at intervals of length $\Delta t$.
The eigenvalues $\lambda_j, j=1,2,\ldots$ computed via Hankel-DMD are related to the Koopman frequencies $\omega_j,~j=1,2,\ldots$ through the following:
\begin{align*}
	\lambda_j = e^{i\omega_j\Delta t}.
\end{align*}
The computed dynamic modes $\tilde \phi_j$ approximate the value of the associated Koopman eigenfunctions along the first $m$ points on the trajectory of the system.

For the periodic flow, we use $200$ samples of the kinetic energy signal, with sampling interval of $0.1~sec$, to form the above Hankel matrix with $m=n=100$. In table \cref{table:periodic}, we present frequencies obtained using the DMD-Hankel method, labeled by $\tilde \omega_k$, and compare with frequencies computed in \cite{Hassan2016}.
We also compare the computed eigenfunctions with the theory for periodic and quasi-periodic attractors presented in \cite{mezic2017koopman}. In fact, the Koopman eigenfunctions for periodic and quasi-periodic attractors are the Fourier basis in the time-linear coordinates defined on the attractor.  To make this notion precise, let $s\in [0,2\pi)$ be the parametrization of the limit cycle given by $\dot{s}=\omega_0$. The Koopman eigenfunction associated with the Koopman frequency $\omega_k:=k\omega_0$ is given by
\begin{align}
	\phi_k=e^{iks},\quad k=1,2,3,\ldots. \label{eq:limitcycleEF}
\end{align}
The real part of the Koopman eigenfunctions $\tilde \phi_k$ computed via the Hankel-DMD method are shown along the trajectory in figure \cref{fig:LinearRe13}. The mean squared error  in the approximation of the six eigenfunctions with largest vector energy is from an order of $10^{-5}$ or smaller ( last column in the table \cref{table:periodic}).

\begin{figure}
	\centerline{\includegraphics[width=.5 \textwidth]{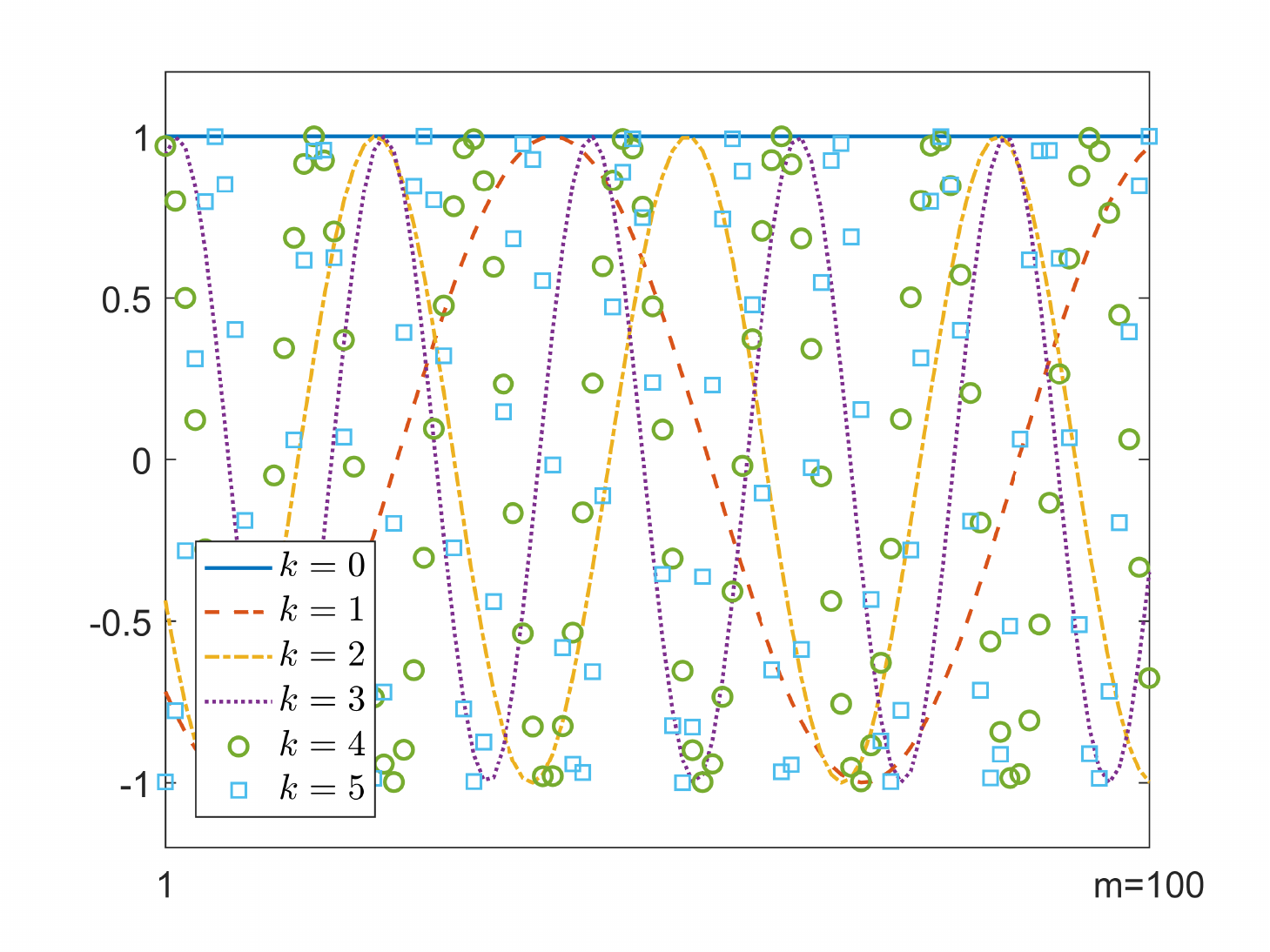}}
	\caption{Real part of the computed Koopman eigenfunctions $\tilde\phi_k$ along the trajectory for periodic cavity flow at $Re=13000$.}
	\label{fig:LinearRe13}
\end{figure}

\begin{table}[h]
	\centering 
	\begin{tabular}{c c c c c}
		k & $\tilde \omega^k$ & $\omega^k$ (\cite{Hassan2016}) & relative error  & $var(\tilde \phi_k -\phi_k)$ \\ [0.5ex]
		\hline 
		0 & 0 & 0 & 0 & $<1e{-10}$\\ 
		1 & 1.00421 & 1.00423 & $1.59e{-5}$ & $1.46e{-6}$\\
		2 & 2.00843 & 2.00840 & $1.56e{-5}$ & $2.25e{-5}$\\
		3 & 3.01264 & 3.01262  & $4.94e{-6}$& $<1.00e{-10}$ \\
		4 & 4.01685 & 4.01680 & $1.55e{-5}$& $<1.00e{-10}$ \\
		6 & 6.02528 & 6.02525 & $5.03e{-6}$ & $<1.00e{-10}$\\ [1ex] 
		\hline 
	\end{tabular}
	\caption{The dominant Koopman frequencies and eigenfunctions for periodic cavity flow computed using observations on kinetic energy.} 
	\label{table:periodic} 
\end{table}

\begin{figure}
	\centerline{\includegraphics[width=.4 \textwidth]{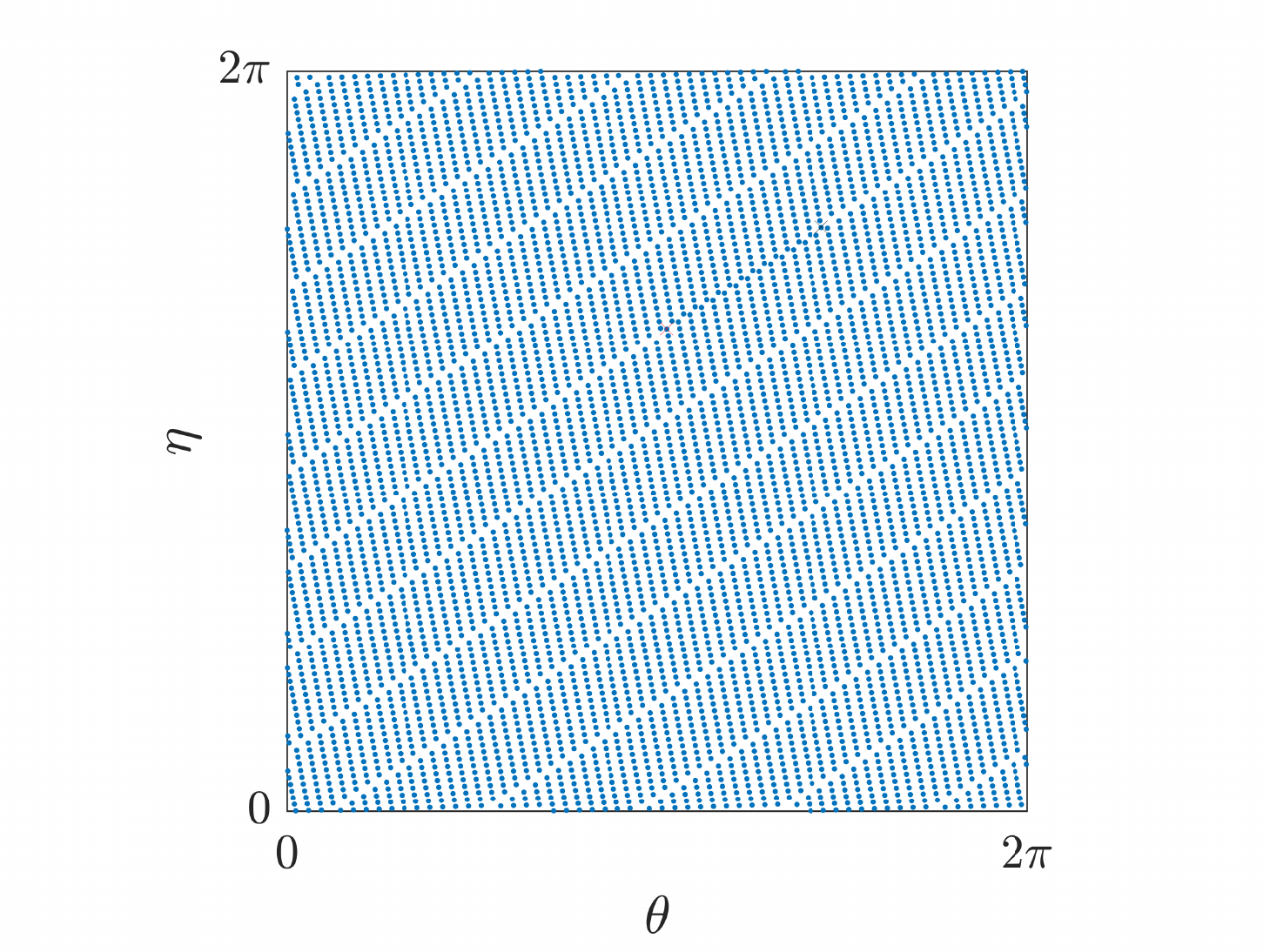}}
	\caption{The  trajectory of quasi-periodic flow on the parameterized torus defined in \cref{eq:lineartorus}.}
	\label{fig:Torus}
\end{figure}

For the quasi-periodic cavity flow, a longer sequence of observations is required to sufficiently sample the attractor which is a 2-torus. The eigenvalues shown in \cref{table:qperiodic} are computed using $6500$ samples of the kinetic energy signal with sampling interval of $0.1$ seconds, and by setting $m=6000$ and $n=500$.
Once the basic frequency vector $(\omega_1,\omega_2)$ is determined, the attractor can be parameterized by the time-linear coordinates $(\theta,\eta)\in [0,2\pi)^2$ with
\begin{align}
	\theta&=\omega_1 t, \nonumber\\
	\eta &= \omega_2 t. \label{eq:lineartorus}
\end{align}
The trajectory of the system on the parameterized torus is shown in \cref{fig:Torus}.
The Koopman eigenfunctions associated with the frequency $\mathbf{k}\cdot(\omega_1,\omega_2)$ are given by
\begin{align}
	\phi_\mathbf{k}=e^{i\mathbf{k}\cdot (\theta,\eta)},~\quad \mathbf{k}\in\mathbb{Z}^2.
\end{align}

The modes obtained by Hankel-DMD method provide an approximation of the Koopman eigenfunction along the trajectory (the dots in \cref{fig:Torus}) which could be extended to the whole torus through an interpolation process. Using this technique, we have plotted the Koopman eigenfunctions on the parameterized torus in  \cref{fig:Linear16k}. The computed value of frequencies and eigenfunctions are in good agreement with the results in \cite{Hassan2016} (\Cref{table:qperiodic}).

\begin{figure}
	\centerline{\includegraphics[width=1\textwidth]{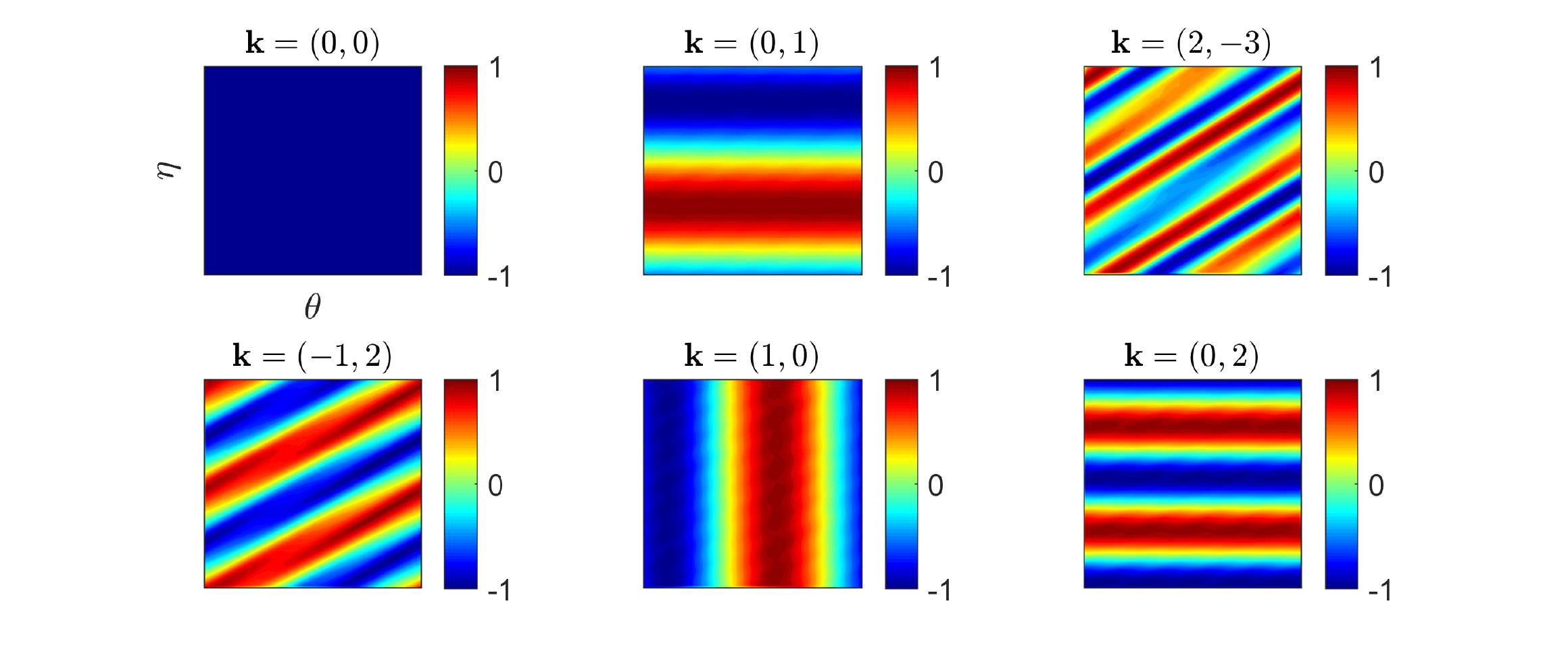}}
	\caption{Real part of Koopman eigenfunctions $\tilde \phi_{\mathbf{k}}$ on the parameterized torus of quasi-periodic cavity flow at $Re=16000$, computed using kinetic energy observable.}
	\label{fig:Linear16k}
\end{figure}

\begin{table*}[ht]
	\centering 
	\begin{tabular}{>{$}c<{$} c c c c} 
		\mathbf{k} & $\tilde \omega_\mathbf{k}$ & $\omega_\mathbf{k}$ (\cite{Hassan2016}) & relative error  & $var(\tilde \phi_\mathbf{k} -\phi_\mathbf{k})$ \\ [0.5ex]
		\hline 
		(0,0) & 0 & 0 & 0 & $1.64e-9$\\ 
		(0,1) & 0.60891 & 0.60890 & $1.58e{-5}$ & $9.58e{-4}$ \\
		(2,-3) & 0.12443 & 0.12598 &$ 1.23e{-2}$ & $1.69e{-1}$\\
		(-1,2) & 0.24159 & 0.24149  & $3.96e{-4}$ & $1.88e{-2}$\\
		(1,0) & 0.97624 & 0.97624 & $3.68e{-6}$ & $1.12e{-3}$\\
		(0,2) & 1.21781 & 1.21773 & $6.40e{-5}$ & $6.29e{-3}$ \\ [1ex] 
		\hline 
	\end{tabular}
	\caption{The dominant Koopman frequencies and eigenfunctions for quasi-periodic cavity flow computed using observations on kinetic energy.} 
	\label{table:qperiodic} 
\end{table*}

\subsection{Computation of asymptotic phase for Van der Pol oscillator}\label{sec:isochron}
We show an application of \cref{cor:EDMDbasin} by computing the asymptotic phase for trajectories of the Van der Pol oscillator. For definition of this problem, we closely follow the discussion in \cite{mauroy2012use}.
Consider the classical Van der Pol model
\begin{align}
	\dot{z}_1=z_2,~ \dot{z}_2=\mu (1-z_1^2)z_2-z_1.\quad \z:=(z_1,z_2)\in\mathbb{R}^2~\label{eq:VDP}
\end{align}
For the parameter value $\mu=0.3$, all the trajectories in the state space $\mathbb{R}^2$ converge to a limit cycle $\Gamma$ with the basic frequency $\omega_0\approx 0.995$.
However, the trajectories converge to different orbits on the limit cycle and their asymptotic phase depends on the initial condition.
The problem of determining the asymptotic phase associated with each initial condition in the state space is of great importance, e.g., in analysis and control of oscillator networks that arise in biology (see e.g. \cite{taylor2008sensitivity,danzl2009event}). The Koopman eigenfunctions provide a natural answer for this problem; if $\phi_0$ is the Koopman eigenfunction associated with $\omega_0$, the initial conditions lying on the same level set of $\phi_0$ converge to the same orbit and will have the same asymptotic phase \cite{mauroy2012use}. The methodology developed in \cite{mauroy2012use} is to compute the Koopman eigenfunction by taking the Fourier (or harmonic) average of a typical observable which requires prior knowledge of $\omega_0$.

We use a slight variation of Hankel-DMD algorithm to compute the basic (Koopman) frequency of the limit cycle and the corresponding eigenfunction $\phi_0$ in the same computation. Consider two trajectories of \cref{eq:VDP} starting at initial conditions $\z^1=(4,4)$ and $\z^2=(0,4)$.
The observable that we use is $f(\z)=z_1+z_2$, sampled at every $0.1$ second over a time interval of $35$ seconds ($m=250$ and $n=100$).
Recall from \cref{remark:ergodicsampling} that we can use various vectors of ergodic sampling with Hankel-DMD algorithm to compute the spectrum and eigenfunctions of the Koopman operator. We populate the Hankel matrices with observation on both trajectories, such that the $l-$th column of the Hankel matrix, denoted by $H^l$, is given by
\begin{align*}
		H^l= [f(\z^1),~f(\z^2),~f\circ T(\z^1),~f\circ T(\z^2),~\ldots,~f\circ T^{m-1}(\z^1),~f\circ T^{m-1}(\z^2)]^T.
\end{align*}
Similarly,
\begin{align*}
UH^l= [f\circ T(\z^1),~f\circ T(\z^2),~f\circ T^2(\z^1),~f\circ T^2(\z^2)~\ldots,~f\circ T^{m}(\z^1),~f\circ T^{m}(\z^2)]^T.
\end{align*}
The dynamic modes obtained by applying the Exact DMD to these Hankel matrices approximate the Koopman eigenfunctions along the two trajectories in the form of
\begin{align*}
\tilde \phi_0(\z^1,\z^2):=[\phi_0(\z^1),~\phi_0(\z^2),~\phi_0\circ T(\z^1),~\phi_0\circ T(\z^2),~\ldots,~\phi_0\circ T^{m-1}(\z^1),~\phi_0\circ T^{m-1}(\z^2)]^T.
\end{align*}

\Cref{fig:VanderPol} shows the agreement between Hankel-DMD and the Koopman eigenfunction obtained from Fourier averaging with known frequency \cite{mauroy2012use}. The right panel shows $\theta=\angle \tilde\phi_0$ plotted as the color field along the trajectories which characterizes the asymptotic phase of each point on the trajectories.

\begin{figure}
	\centerline{\includegraphics[width=1.1 \textwidth]{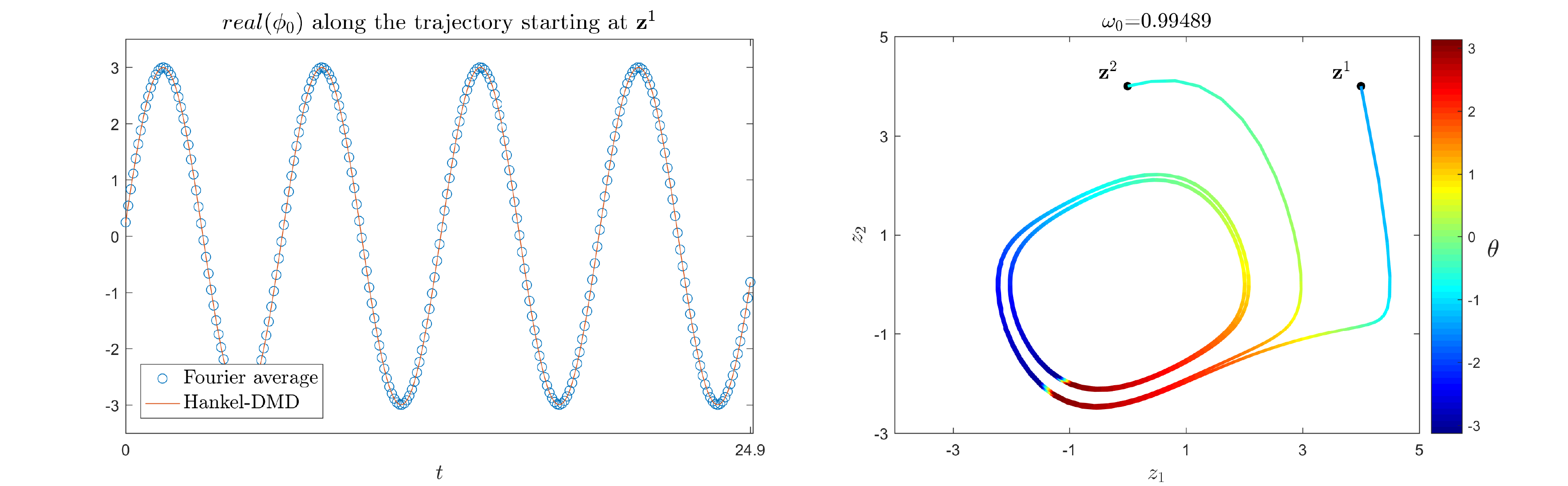}}
	\caption{Asymptotic phase for Van der Pol oscillator: The Koopman eigenfunction associated with $\omega_0$ along the trajectory starting at $\z^1$ (left), and asymptotic phase for points along two trajectories starting at $\z^1$ and $\z^2$ (right). The states with the same color converge to the same orbit on the limit cycle.}
	\label{fig:VanderPol}
\end{figure}

\subsection{Application to multiple observables: the quasi-periodic cavity flow}
We show the application of \cref{alg:HDMD} with multiple observables by revisiting the example of quasi-periodic flow in \cref{sec:Cavity}. Let $\{G_i:=G(t_0+i\Delta t)\}$ be the set of measurements of the stream function at a point on the flow domain ($x=y=0.3827$ in the domain defined in \cite{Hassan2016}). Also let $E$ be the kinetic energy of the flow.  We use the Hankel matrices of observations on $G$ and $E$, by setting $m=6000$ and $n=500$ and form the data matrices
\begin{align}
	X=\left[\begin{matrix}
	\tilde H_E & \tilde \alpha \tilde H_G
	\end{matrix}\right] ,\quad
	Y=\left[\begin{matrix}
	U\tilde H_E & \tilde \alpha U\tilde H_G
	\end{matrix}\right].
\end{align}
where we have approximated the scaling factor $\tilde \alpha$ by
\begin{align}
\tilde \alpha = \frac{\|\tilde G_{m}\|}{\|\tilde E_{m}\|}.
\end{align}
\Cref{table:qperiodic2} shows the error in approximation of Koopman frequencies and eigenvalues using two observables. The accuracy of computation is comparable to the single-observable computation in \cref{sec:Cavity}. However, since we are supplementing the observation on $E$ with measurements on $G$, we are able to capture new eigenfunctions, three of which are shown in the  bottom row of \cref{fig:Linear16k_exact}.
\begin{figure}
	\centerline{\includegraphics[width=1 \textwidth]{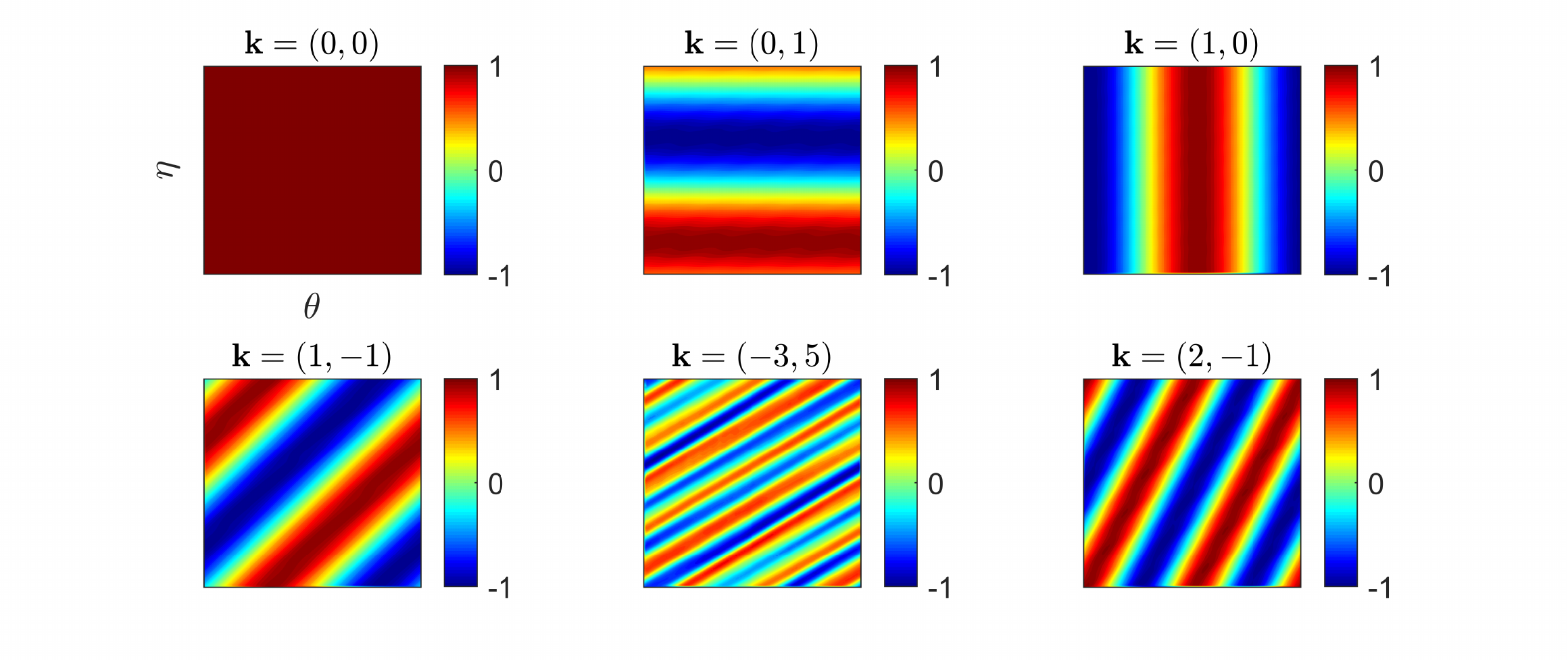}}
	\caption{Real part of Koopman eigenfunctions $\tilde \phi_{\mathbf{k}}$ on the parameterized torus of quasi-periodic cavity flow at $Re=16000$, computed using two observables: kinetic energy and stream function.}
	\label{fig:Linear16k_exact}
\end{figure}

\begin{table*}[ht]
	\centering 
	\begin{tabular}{>{$}c<{$} c c c c} 
		\mathbf{k} & $\tilde \omega_\mathbf{k}$ & $\omega_\mathbf{k}$ (\cite{Hassan2016}) & relative error  & $var(\tilde \phi_\mathbf{k} -\phi_\mathbf{k})$ \\ [0.5ex]
		\hline 
		(0,0) & 0 & 0 & 0 & $6.76e-9$\\ 
		(0,1) & 0.60892 & 0.60890 & $1.67e{-5}$ & $9.38e{-4}$ \\
		(1,0) & 0.97624 & 0.97624 &$ 5.86e{-6}$ & $5.90e{-3}$\\
		(1,-1) & 0.36732 & 0.36728  & $1.13e{-4}$ & $6.98e{-4}$\\
		(-3,5) & 0.11680 &  ---	&		---	 & $1.34e{-1}$\\
		(2,-1) & 1.34353 & 1.34352 & $3.68e{-6}$ & $1.10e{-2}$ \\ [1ex] 
		\hline 
	\end{tabular}
	\caption{Accuracy of the dominant Koopman frequencies and eigenfunctions for quasi-periodic cavity flow computed using Hankel-DMD method with two observables.}
	\label{table:qperiodic2} 
\end{table*}

\section{Summary and future work}
\label{sec:conclusion}
In this paper, we have studied the convergence of DMD algorithms for systems with ergodic attractors. Our approach is based on approximation of the function projections using the vector projection in DMD, which is made possible by the Birkhoff's ergodic theorem.
 We showed a precise connection between SVD, which is frequently used in the DMD algorithms, and POD of ensemble of observables on the state space. By applying SVD to Hankel-embedded time series, we gave a new representation of chaotic dynamics on mixing attractors based on the evolution of coordinates in the space of observables. This representation turns out to be more beneficial for analysis and control purposes than the classic state-space trajectory-based representation.
To compute the discrete Koopman operator for systems with ergodic attractors, we introduced the Hankel-DMD algorithm which is equivalent of applying the classic DMD algorithm to Hankel-type data matrices.
{ This algorithm can compute Koopman spectrum using a small number of observables and trajectories in high-dimensional systems like fluid flows. The Hankel-DMD method also shows promise for computing the dissipative eigenvalues of the Koopman operator, i.e., eigenvalues inside the unit circle. We will discuss this in future articles.}


\section*{Time series data and MATLAB codes}
The time-series data  and MATLAB codes of the numerical examples can be found at \url{https://mgroup.me.ucsb.edu/resources}.

\section*{ACKNOWLEDGMENT}
H.A. thanks Nithin Govindararjan for his comments on the initial manuscript.

\bibliographystyle{plain}
\bibliography{MOTDyS,KMDbib}
\end{document}